\documentclass{amsart}
\pdfoutput=1
\usepackage[utf8]{inputenc}

\usepackage{cite}

\usepackage{amsmath}
\usepackage{amssymb}
\usepackage{amsfonts}
\usepackage{amsthm}
\usepackage{geometry}
\usepackage{enumerate}
\usepackage{tikz}
\usepackage{wrapfig}
\usepackage{tkz-euclide}

\usepackage{hyperref}
\usepackage{geometry,amsmath,amssymb,bbm}

\usepackage{todonotes}

\usepackage{xfrac}

\newcommand{\R}{\begin{ensuremath}	\mathbb R\end{ensuremath}} 
\newcommand{\N}{\begin{ensuremath}	\mathbb N\end{ensuremath}}

\newcommand{\Q}{\begin{ensuremath}	\mathbb Q\end{ensuremath}}

\renewcommand{\d}{\begin{ensuremath}\,\mathrm d\end{ensuremath}} 
\newcommand{\del}{\partial}

\newcommand{\alpham}{\alpha_\ominus}
\newcommand{\alphap}{\alpha_\oplus}

\newcommand{\eps}{\varepsilon}

\newtheorem{theorem}{Theorem}
\newtheorem{lemma}[theorem]{Lemma}

\newtheorem{example}[theorem]{Example}
\newtheorem{remark}[theorem]{Remark}
\newtheorem{corollary}[theorem]{Corollary}

\newtheorem{question}{Question}
\newtheorem*{requirement}{Requirement}

\title{Pattern size in Gaussian fields from spinodal decomposition }
\author[L.~A.~Bianchi]{Luigi Amedeo Bianchi}
    \address{L.~A.~Bianchi, Institut f\"ur Mathematik\\ Universit\"at Augsburg\\ D-86135 Augsburg, Germany}
    \email{\href{mailto:luigi.bianchi@math.uni-augsburg.de}{luigi.bianchi@math.uni-augsburg.de}}
    \urladdr{\url{http://www.math.uni-augsburg.de/prof/ana/arbeitsgruppe/bianchi/}}
    \author[D.~Bl\"omker]{Dirk Bl\"omker}
    \address{D.~Bl\"omker, Institut f\"ur Mathematik\\ Universit\"at Augsburg\\ D-86135 Augsburg, Germany}
    \email{\href{mailto:dirk.bloemker@math.uni-augsburg.de}{dirk.bloemker@math.uni-augsburg.de}}
    \urladdr{\url{http://www.math.uni-augsburg.de/prof/ana/arbeitsgruppe/bloemker/}}
    \author[P.~Wacker]{Philipp Wacker}
    \address{P.~Wacker, Institut f\"ur Mathematik\\ Universit\"at Augsburg\\ D-86135 Augsburg, Germany}
    \email{\href{mailto:dueren@math.uni-augsburg.de}{dueren@math.uni-augsburg.de}}
    \urladdr{\url{http://www.math.uni-augsburg.de/prof/ana/arbeitsgruppe/dueren/}}
  \subjclass[2010]{{\bf 60H15}, 60H05, 60G15, 60G60}
  \keywords{Cahn-Hillard equation, pattern size, Gaussian fields, ergodic theorem}
  \thanks{P.W. was supported by a Cusanuswerk scholarship. L.A.B. was supported by the German Science Foundation (DFG),
  grant number BL 535/9-2
  ``Mehrskalenanalyse stochastischer partieller Differentialgleichungen
  (SPDEs)''.}
\date{\today}

\begin{document}
\begin{abstract}
We study the two-dimensional snake-like pattern that arises in 
phase separation of alloys described by spinodal decomposition in the Cahn-Hilliard model.
These are somewhat universal patterns due to an overlay of eigenfunctions
of the Laplacian with a similar wave-number.
Similar structures appear in other models like reaction-diffusion systems describing animal 
coats' patterns or vegetation patterns in desertification.

Our main result studies random functions given by cosine Fourier series with independent Gaussian coefficients, 
that dominate the dynamics in the Cahn-Hilliard model.
This is  not a cosine process, as the sum is 
taken over domains in Fourier space that not only
grow and scale with a parameter of order $1/\varepsilon$,
but also move to infinity. 
Moreover, the model under consideration is neither stationary nor isotropic.

To study the pattern size of nodal domains 
we consider the density of zeros
on any straight line through the spatial domain.
Using a theorem by Edelman and Kostlan and weighted ergodic 
theorems that ensure the convergence of the moving sums, 
we show that the average distance of zeros is asymptotically of order $\varepsilon$ 
with a precisely given constant.
\end{abstract}
\maketitle

\section{Introduction}

We are interested in studying the patterns that form in the solution of the stochastic Cahn-Hilliard equation
during the separation process called spinodal decomposition. 
This equation, originally introduced in~\cite{cahn} and~\cite{cahnhilliard}, 
models the relative concentration of two components in an alloy after quenching an initially homogeneous mixture.

Similar structures appear in many other situations.
One example are the patterns in coats  
of animals like zebras or tigers \cite{Mu:MathBio},
where the underlying system is a reaction-diffusion system with a Turing instability, as argued by Sander and Wanner in~\cite{sanwan03}. 
Another occurrence are the vegetation patterns in desertification like the one observed for the tiger bush in Africa, as studied for example by Siero, Siteur et al. in~\cite{Rademach1,Rademach2}. In those papers they provide a mathematical study of the emergence of 
pattern on an unbounded domain in the extended 
Klausmeier model, which is similar to a reaction-diffusion system.

More similar to the Cahn-Hilliard setting is the Swift-Hohenberg equation (see for example the review~\cite{CrHo93}). 
A result very much related to the result presented here is given by Bl\"omker and Maier-Paape in~\cite{Bl-MP:03}. 
The main difference is that significantly fewer Fourier-modes determine the pattern.
The initial stage of hill formation of an example from surface growth \cite{RaLiHa01,HoNa02} 
also used in ion sputtering \cite{sput-2}
exhibits similar patterns, too. For details see the review article~\cite{Bl-Ro:15}.

We believe that in all these examples the characteristic snake like pattern that 
appear initially is mainly 
due to random composition of eigenfunctions of the Laplacian of similar wave-length,
where the solution is described by a high dimensional strongly unstable space 
and the nonlinearity does not yet play a significant role.
In order to outline this idea, we focus in the following sections 
on the stochastic Cahn-Hilliard equation on the square.

The main question that we want to answer rigorously in this paper is the following:
\begin{center}
 \emph{What is the characteristic thickness of the pattern (i.e. the snake-like structures)?}
\end{center}
To address this, we count the (average) number of zeros on any  straight line across
the domain, using a result given by Edelman and Kostlan in~\cite{edelman1995many}.

In the context of spinodal decomposition, a
partial result was obtained by Maier-Paape and Wanner in~\cite{maier1998spinodal}, where they tried to estimate the size of balls
that would fit into the nodal domain. A numerical computation of Betti numbers was performed in a series of papers
\cite{gammiswan05,daykalmiswan07,daykalwan09}.

Moreover the structure of nodal domains for Gaussian random fields 
is the topic of numerous recent publications, which we do not try to survey in full detail.
Under the assumption of stationarity and isotropy there are asymptotic results 
on the numer of zeros along lines. See for example  \cite{dennis2007nodal,longuet1957statistical}.
 Minkowski-functionals are also treated (see \cite{AdlerLNM} for a review), or \cite{SodinLN} 
for references to asymptotic results of the number of connected components.
We comment on these approaches later on in more detail.

\subsection{The Cahn-Hilliard-Cook equation}

The stochastic Cahn-Hilliard (or Cahn-Hilliard-Cook) equation 
was introduced by Cook in~\cite{cook} as a stochastic modification of the originally deterministic model.
It can be written as follows:
\[
\del_t u = -\Delta(\eps^2\Delta u + f(u)) + \del_t W, 
\]
where the noise $\del_t W$ is the derivative of a $Q$-Wiener process 
and $f$ is the derivative of a double well potential, 
where a standard choice  is $f(u) = u-u^3$, although the true nonlinearity 
introduced by Cahn and Hilliard should exhibit logarithmic poles. 
We consider it on the square domain $[0,1]^2$, 
with Neumann boundary conditions $\del_\nu u = \del_nu \Delta u$. 
In the physical model of alloys, $u$ models the rescaled concentration of one component,
and the extreme values $u=\pm 1$ correspond to $0\%$ and $100\%$ concentration 
of the first component in any point.

The canonical initial condition for the phase separation in spinodal decomposition 
is a homogeneous concentration, 
constant on the whole domain. Due to the presence of the noise, 
after some time the homogeneous picture is broken, decomposition starts playing its role and snake-like pattern appear and persist, 
giving place to situations like the one simulated in Figure~\ref{im:spinodal}.

\begin{figure}[hbtp]
\centering
\includegraphics[width=0.5\textwidth]{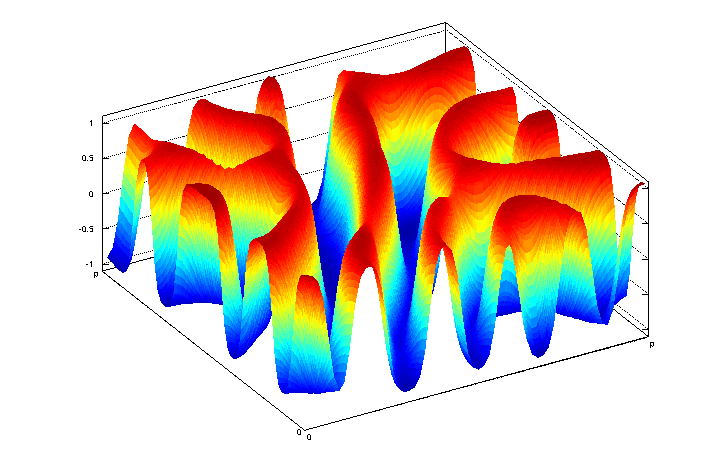}
\caption{Pattern in Spinodal decomposition. The  snake-like pattern appears
as the nodal domains in the level set of the average concentration $0$.}
\label{im:spinodal}
\end{figure}

For the Cahn-Hilliard equation, one can consider the set of the most unstable eigenvalues, 
which dominate the dynamics for a long time,  also called ``strongly unstable subspace''.
This was  studied in the deterministic model by Maier-Paape and Wanner in~\cite{maier1998spinodal, MPW2}
and later in the stochastic setting by Bl\"omker, Maier-Paape and Wanner in~\cite{bloemker_maierpaape_wanner, DBMPW1} (see also the review \cite{SpiDecRev}).
Later Sander and Wanner 
extended in~\cite{SanderWanner} the approximation result by linearisation to unexpectedly large radii.

It is interesting to notice that the evolution of pattern complexity in the spinodal decomposition
seems to be quite different in the deterministic model with random initial conditions or the  
stochastic model with constant initial condition.
In~\cite{gammiswan05} it was shown numerically that the deterministic model 
exhibits an unnatural increase of complexity during the separation process.

\subsection{Linearised Cahn-Hilliard-Cook equation}

While the Cahn-Hilliard equation is highly nonlinear, 
its dynamics is reasonably well approximated by linearisation in the first phase up to unexpectedly large radii. 
So let us discuss the linearised system (around the initial conditions $u=m$) to motivate why  we study random cosine series later.

Such linearised system can be written as
\begin{equation}\label{eq:linearized}
	\del_t u = Au + \del_t W\;, \qquad u(0)=m,
\end{equation}
where $A = -\eps^2 \Delta^2 - \Delta f^\prime(m)$ is a self-adjoint linear operator having a complete  orthonormal 
system of eigenfunctions. 
In our study we focus on the simple domain $[0,1]^2$ to avoid additional pattern complexity introduced by a
complicated shape of the boundary. The $L^2$-basis on the square is made of cosine functions  
\[
	e_{k,l}(x) = C\cos(k\pi x)\cos(l\pi y),
\]
with $A e_{k,l} = \lambda_{k,l} e_{k,l}$, where
\[ 
	\lambda_{k,l}=-\eps^2 (k^2+l^2)^2\pi^4+(k^2+l^2)\pi^2f^\prime(m).
\]

The solution to~\eqref{eq:linearized} is given by the stochastic convolution (see \cite{daPrato}),
which simplifies here to a cosine series with random coefficients.
\[
	W_A(t) =\sum_{k,l\in\N} \int_0^t e^{(t-s)A} \d W(s) 
	= \sum_{k,l\in\N}\alpha_{k,l} \int_0^t e^{(t-s)\lambda_{k,l}} \d B_{k,l}(s) e_{k,l},
\]
where the $B_k$ are independent Brownian motions and the $Q$-Wiener process $W$ has a joint eigenbasis with $A$ such that $Qe_{k,l}=\alpha_{k,l}^2 e_{k,l}$.
This is a usual but quite strong assumption, that $Q$ commutes with $A$. As this is only our motivating example,
for simplicity of presentation we do not enter this discussion here. Some details and further references can be found in 
\cite{SpiDecRev}. One also could think of space-time white noise, where $Q$ is the identity and thus all $\alpha_k$ are one. 

The strongly unstable subspace also called ``strong subspace'', for short, is defined 
in Fourier space by the wave-vectors
\[
	R_\eps^\gamma := \{(k,l)\in \N^2: \lambda_{k,l} > \gamma \lambda_{\max}\},
\] 
for
some $\gamma\in (0,1)$ close to $1$. In particular, 
in the two-dimensional setting we consider, 
the set $R_\eps^\gamma$ is a quarter-ring in the $\N^2$ lattice, 
containing $\mathcal{O}(\eps^{-2})$ many wave-vectors corresponding to 
eigenvalues close to the maximum.
In the following we do not always specify the explicit 
dependence of $R_\eps^\gamma$ on $\gamma$ and write $R_\eps$ for short.
Note that this set not only is growing in size for $\eps\to0$,
but also moves as a whole towards infinity.

As already outlined, as long as the solution is not too large, 
the dynamics of the nonlinear Cahn-Hilliard equation is dominated 
by the projection $P_{R_\eps}$ of the stochastic convolution on the strong subspace.
By this we mean the restriction of the Fourier series to wave-vectors in $R_\eps$, which is given by
\[
	P_{R_\eps} W_A(t) = \sum_{m = (k,l)\in{R_\eps}} \alpha_m c_m e_m,
	\qquad \text{with}\quad  c_m=\int_0^t e^{(t-s)\lambda_m} \d B_m(s)\;.
\]
The random variables in the family $\left\{c_m\right\}_{m\in R_\eps}$ are by definition independent centred Gaussians.
By It\=o-isometry the variance of $c_m$ is
\[
	 \mathbb{E}c_m^2= \int_0^t e^{(t-s)2\lambda_m}\d s 
	 = \frac1{2\lambda_m}(1-e^{-2\lambda_m t}) 
	 \approx  \frac1{2\lambda_m}
	 =\mathcal{O}(\eps^2), 
\]
for times $t\approx\eps^2$, which is close to the time-scale on 
which the first phase of spinodal decomposition was described (see~\cite{DBMPW1}), 
recalling that $\lambda_m = \mathcal{O}(\eps^{-2})$, as it is of order of the largest eigenvalue. 
Hence after linearisation and projection 
via $P_{R_\eps}$, the solution at a fixed time $t$ seems to be well approximated by
\begin{equation}\label{eq:randomFourier}
	u(x,y) \approx \sum_{(k,l)\in R_\eps} c_{k,l} \cdot \cos(k\pi x)\cos(l\pi y),
\end{equation}
where the $c_{k,l}$ are independent centred Gaussians with similar variances.
This motivates the choice of our toy model, which we will present in the next subsection, fixing for ease of presentation all the variances of the coefficients to be the same.

\subsection{Random Fourier Series}

For the main part of the paper we consider the random function
\begin{equation}\label{eq:fnc}
	f(x,y) = \sum_{(k,l)\in R_\eps} c_{k,l} \cdot \cos(k\pi x)\cos(l\pi y),
\end{equation}
on the unit square $x,y\in[0,1]^2$, with the random coefficients $c_{k,l}$ 
being independent and identically distributed centred Gaussian random variables.
Later we discuss also the impact of other domains in Fourier space, and not only the ring.

As we showed in the previous subsection, this is  a simplified version 
(adding the identically distributed assumption) 
of the approximation~\eqref{eq:randomFourier} of the stochastic Cahn-Hilliard equation.
We furthermore set the variance to be $1$, i.e. $c_{k,l}\sim N(0,1)$.
Doing this, we ignore both the time-dependence and the inhomogeneity 
in the growth rates of the strong Fourier modes for the sake of simplicity.

We define the subset $R_\eps$ of strong modes, introduced above in the Cahn-Hilliard case, as
\[
	{R_\eps} = \{(k,l)\in\N^2|~ \alpham < \sqrt{(k\eps)^2+(l\eps)^2} <  \alphap\},
\] 
where the parameters are 
\[
	\alphap = \sqrt{\frac{1+\sqrt{1-\gamma}}{2\pi^2}} 
	\quad\text{and}\quad 
	\alpham = \sqrt{\frac{1-\sqrt{1-\gamma}}{2\pi^2}}
	\quad\text{with}\quad \gamma\in (0,1).
\]
Although the model \eqref{eq:fnc} is somewhat reminiscent of the cosine-process (see e.g.~\cite{AdlerTaylorGeo}) 
or the random wave model (see e.g.~\cite{dennis2007nodal}), it is fundamentally different in the sense that 
it is neither stationary nor isotropic. The law  of the random function $f$
might change under translation or rotation
(as a function extended to $\mathbb{R}^2$). 
It is an easy calculation to show that $f(x)$ is a centered Gaussian  with covariance
\[
\mathbb{E}f(x)f(y) = q(x+y)+q(x-y) 
\quad\text{for} \quad
q(z)=\frac12 \sum_{(k,l)\in R_\eps} \cos(k\pi z) \cos(l\pi z)\;.
\]

\subsection{Pattern size}

\begin{figure}[hbtp]
\centering
\includegraphics[width=.45\textwidth]{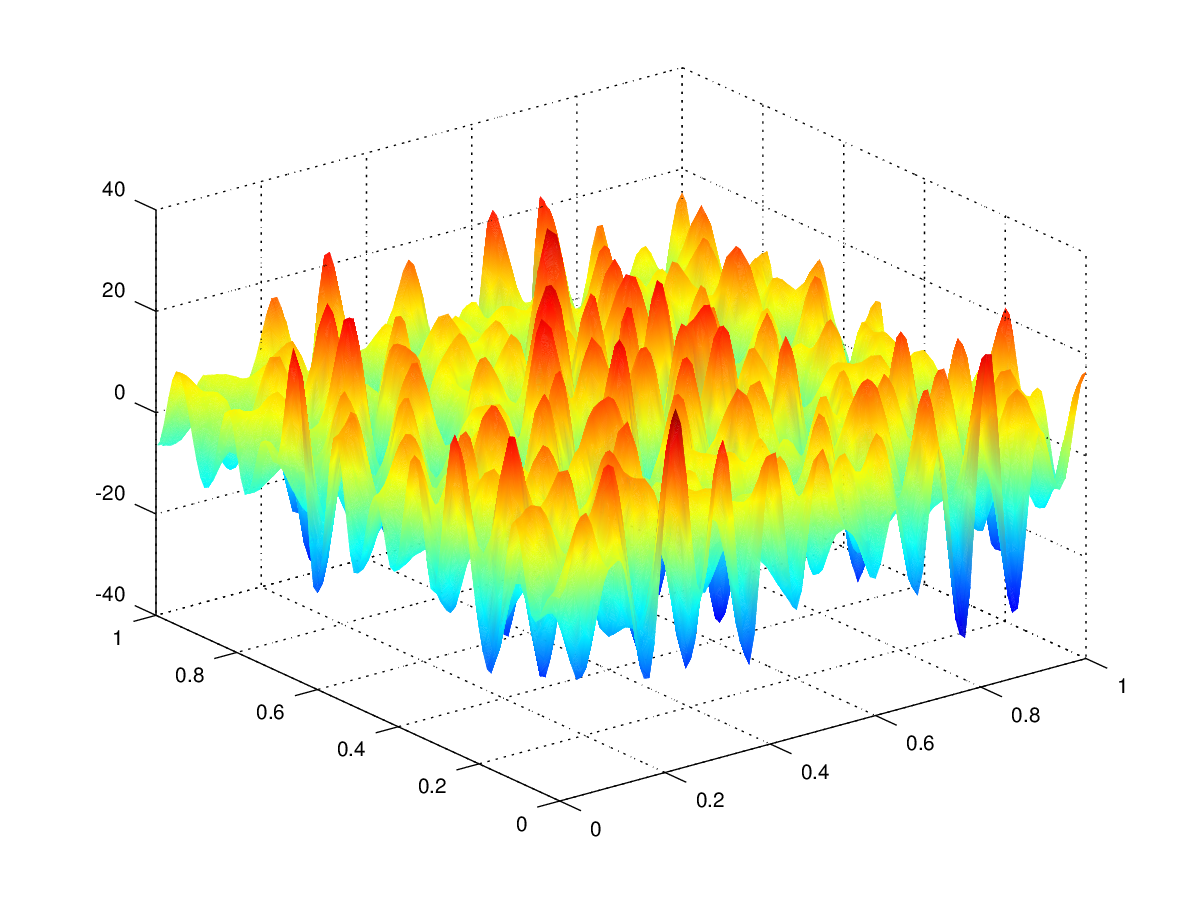}
\includegraphics[width=.45\textwidth]{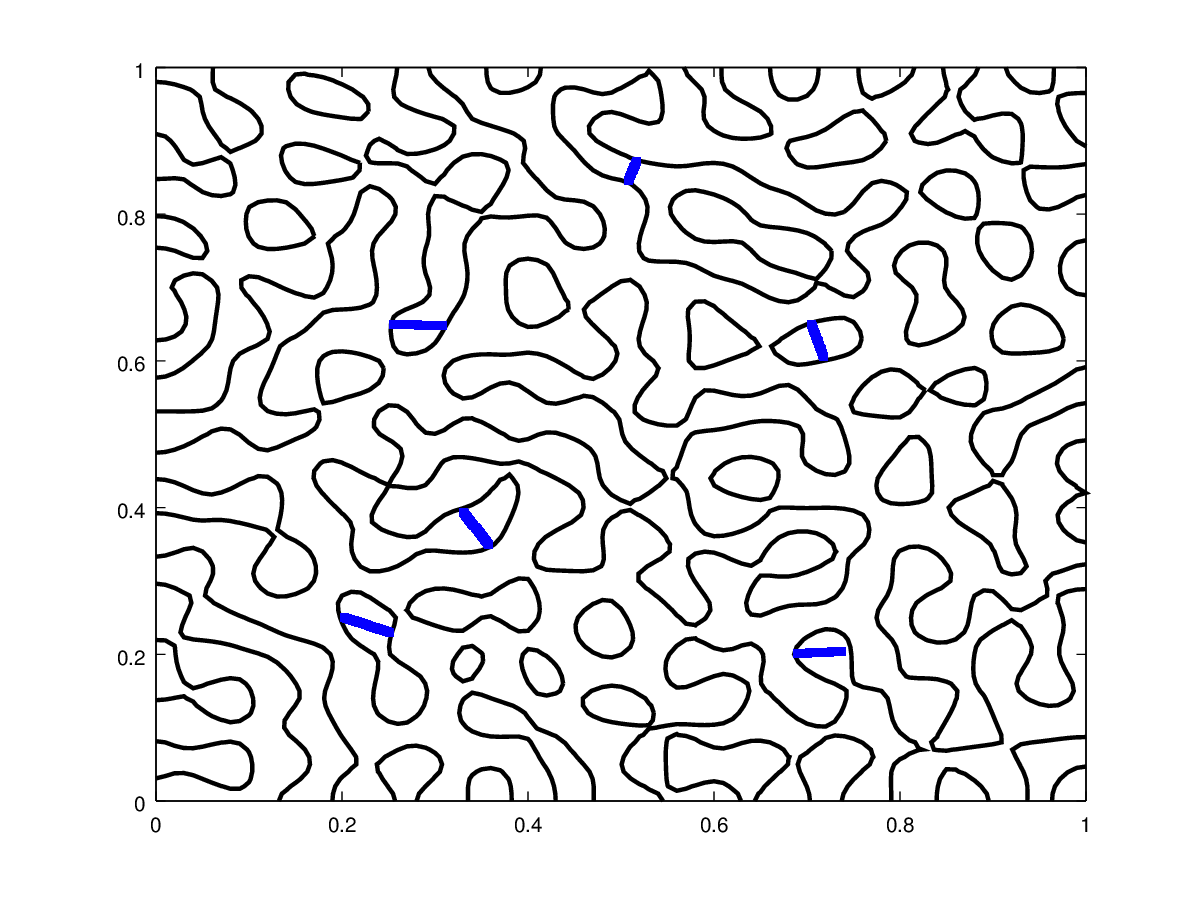}
\caption{A simulation of the function $f$ for $\gamma = 0.8$  with a plot of its zero-level set.
On the latter we picked some measurements of the pattern thickness. 
Empirically, the average distance is $2\pi\eps$. 
There is of course some variation and there are areas and directions where the thickness is much higher or lower.}
\label{img:fnc}
\end{figure}

A simulation of~\eqref{eq:fnc} is presented in Figure~\ref{img:fnc}. 
We can see the snake-like patterns which are characteristic of the Cahn-Hilliard model.
It appears from the numerical simulations 
that the thickness of the structures is somewhat proportional to $\eps$,
as it is shown in Figure~\ref{img:fnc}.
Here we propose to measure such thickness as the distance between consecutive zeros of 
the solution considered on an arbitrary 
line through the domain, as exhibited in Figure~\ref{im:slopedline}. 

\begin{figure}[hbtp]
\centering
\includegraphics[width=\textwidth]{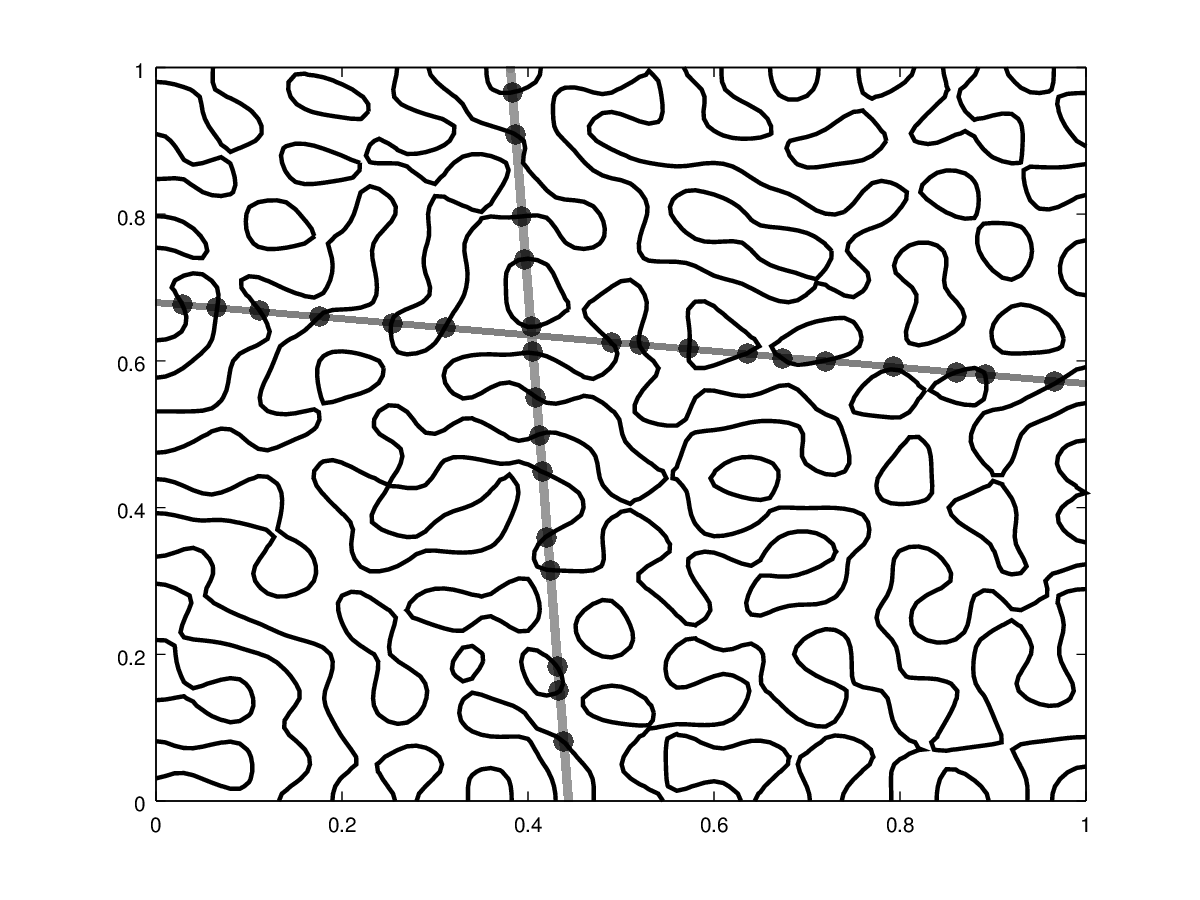}
\caption{Pattern size computation of the function $f$ 
by inserting arbitrary lines through the spatial domain $[0,1]^2$, where we represent the zero level sets for our model in the Fourier domain $R_{0.01}$. 
The zeros are order $\eps$ apart, on average.}
\label{im:slopedline}
\end{figure}

This definition of a pattern size is intuitively the right one, 
as the dominating Fourier space $R_\eps$ consists of frequencies given by the wave-vector $(k,l)\in R_\eps$ 
are all of the order of magnitude of the   wave-number $|(k,l)|$ which is $ \eps^{-1}$,
and hence the average distance is conjectured to be of the same order as $C'\eps$.
Although this line of reasoning sounds very plausible, in the wave-vector the individual frequencies can vary vastly. 
Moreover we will also see later in the main results, that when counting zeros on arbitrary straight lines, only the scaling of the 
upper bound for the wave number seems to be essential. 
Let us remark that for stationary isotropic Gaussian fields there are quite a few publications on the 
average density of zeros along lines. They already date back to the work of 
Rice \cite{rice1948statistical} or Longuet-Higgins \cite{longuet1957statistical}, \cite{longuet1957statistical2}.
See also Dennis \cite{dennis2007nodal}. 
Their approach yields qualitatively similar results of a pattern size of order $\eps$ 
for  models given by cosine series   similar to~\eqref{eq:fnc},
 but none of those methods seem to apply straightforwardly to~\eqref{eq:fnc},
 in that they usually assume isotropy and stationarity. 

A rigorous quantification of the geometry and topology of Cahn-Hilliard-Cook patterns is still an open problem, though. 
The first attempt in the setting of spinodal decomposition  by Maier-Paape and Wanner in~\cite{maier1998spinodal} yielded a partial result 
by bounding the radii of balls that would fit into the nodal domain. That strategy worked completely only in the setting of~\cite{Bl-MP:03},
where the ring $R_\eps$ was still growing, but its size was smaller by a small power in $\eps$. 

As already mentioned in the previous subsection, recent numerical  work on the number of components in the pattern has been done, 
using rigorous methods from computational algebraic topology to compute the Betti numbers, 
by Gameiro, Mischaikow and Wanner in~\cite{gammiswan05} 
or similarly by Day, Kalies, Mischaikow and Wanner in~\cite{daykalmiswan07, daykalwan09}. 
See also Guo and Hwang \cite{guohwa07} and Sander and Tatum \cite{santan12} 
for additional results on the pattern in Cahn-Hilliard equation. 
Let us recall that Betti numbers for the two-dimensional nodal domain 
count the number of connected components and the number of holes.

For general stationary or isotropic Gaussian fields there is also some work 
on Minkowski functionals measuring the area and the number of connected components 
of the nodal domain as well as the length  of the set of level $0$. See \cite{AdlerLNM}, for a collection of results.
But even the precise constant in the asymptotic behaviour of the number of connected components
seems to be still an open problem. See \cite{SodinLN} for further references.
None of these results seems to fit to \eqref{eq:fnc} anyway, not just because
of the assumptions on isotropy and stationarity, but also because we are summing over index sets that are not only growing, but also moving,
and thus the asymptotic limit is not clear.

\section{Main result}

The question we aim to rigorously answer is:
\begin{question}\label{que:main}
	What is the characteristic thickness of the pattern (i.e. snake-like structures) 
	in our model $f(x,y)$ defined in~\eqref{eq:fnc}, on the unit square $(x,y)\in[0,1]^2$?
\end{question}

In Figure~\ref{img:fnc} we can see that the average thickness appears to be $2\pi\eps$, which we will prove to be the case.

To address Question~\ref{que:main}, we take the following approach: 
we draw a straight line across the unit square and we count the (average) 
number of zeros of $f$ on that segment, see Figure~\ref{im:slopedline}.
Then we divide the length of the segment by the number of zeros, obtaining the average distance between zeros, that is the average pattern size.
So the problem reduces to counting the number of zeros of $f$, a random function, and a way of doing this is provided (in a more general form than reported here) by Edelman and Kostlan in~\cite{edelman1995many}:

\begin{theorem}[Number of real zeros of a random function]\label{thm:edelman}
	Let $v(x)=(f_0(x),\ldots,f_n(x))^T$ be any collection of differentiable functions 
	and $c_0,\ldots,c_n$ be independent and identically distributed Gaussians centred in 0.  
	Given the function
	\[
	h(x) = \sum_{k=0}^n c_k\cdot f_k(x),
	\]
	the density of real zeros of $h$ on an interval $I$ is
	\[
	\delta(x) = \frac{1}{\pi}\left\| \frac{\d}{\d x}w(x)\right\|_{\R^n},
	\qquad\textrm{where}\qquad 
	w(x)=\frac{v(x)}{\|v(x)\|_{\R^n}}.
	\]
	The expected number of real zeros of $h$ on $I$ is then
	\begin{equation*}
		\int_I \delta(x)\d x.
	\end{equation*}
\end{theorem}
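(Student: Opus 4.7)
My plan is to apply the Kac--Rice formula for the expected number of zeros of a smooth centered one-dimensional Gaussian process, and then verify by a short algebraic computation that the resulting density collapses to $\frac{1}{\pi}\|w'(x)\|$. Conceptually this is the cleanest route; an alternative would be the more geometric argument (interpreting the zero set of $h$ as the preimage of a random great hyperplane under the spherical curve $w$, and invoking integral geometry on $S^n$), but analytically the two approaches boil down to the same identity, so I would proceed via Kac--Rice.

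After rescaling so that $\E c_k^2 = 1$ (which does not change the zero set of $h$), the function $h$ is a centered Gaussian process with covariance $K(x,y)=\langle v(x),v(y)\rangle$, hence
\[
\mathrm{Var}\,h(x)=\|v(x)\|^2,\quad \mathrm{Var}\,h'(x)=\|v'(x)\|^2,\quad \mathrm{Cov}(h(x),h'(x))=\langle v(x),v'(x)\rangle.
\]
The Kac--Rice formula then reads $\delta(x)=p_{h(x)}(0)\cdot \E[\,|h'(x)|\mid h(x)=0\,]$. Since $(h(x),h'(x))$ is jointly Gaussian, the conditional law of $h'(x)$ given $h(x)=0$ is a centered Gaussian with variance $\sigma^2(x)=\|v'\|^2 - \langle v,v'\rangle^2/\|v\|^2$. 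Plugging in $\E|X|=\sigma\sqrt{2/\pi}$ for $X\sim N(0,\sigma^2)$ together with $p_{h(x)}(0)=1/(\sqrt{2\pi}\,\|v(x)\|)$, the $\sqrt{2\pi}$ factors combine with $\sqrt{2/\pi}$ to leave exactly $1/\pi$, giving $\delta(x)=\sigma(x)/(\pi\|v(x)\|)$.

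The remaining step is the algebraic identity. Differentiating $w=v/\|v\|$ yields
\[
w'=\frac{v'}{\|v\|}-\frac{\langle v,v'\rangle}{\|v\|^3}\,v,
\]
and a direct expansion gives
\[
\|w'\|^2=\frac{\|v'\|^2}{\|v\|^2}-\frac{\langle v,v'\rangle^2}{\|v\|^4}=\frac{\|v\|^2\|v'\|^2-\langle v,v'\rangle^2}{\|v\|^4}=\frac{\sigma^2(x)}{\|v(x)\|^2},
\]
so $\delta(x)=\frac{1}{\pi}\|w'(x)\|$, and integration over $I$ finishes the proof.

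I do not foresee any serious obstacle: the hypotheses of Kac--Rice are immediate from the $C^1$ regularity of the $f_k$ together with non-degeneracy of $h(x)$, which holds wherever $v(x)\neq 0$, with isolated exceptional points contributing measure zero to the integral. The one mildly delicate point is justifying the use of the conditional variance formula, but this is the standard one-line computation for jointly Gaussian pairs, and the independence/identical-distribution assumption on the $c_k$ enters precisely at the point where $K(x,y)$ reduces to the inner product $\langle v(x),v(y)\rangle$; without it the clean geometric expression in terms of $w$ would be lost.
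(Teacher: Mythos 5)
This statement is quoted from Edelman--Kostlan \cite{edelman1995many} and the paper offers no proof of its own, so there is nothing internal to compare against; the relevant question is only whether your derivation is sound, and it is. Your Kac--Rice computation is the standard one (and is essentially one of the two derivations Edelman and Kostlan themselves give, the other being the integral-geometric argument on the sphere that you mention): the covariance reduction $K(x,y)=\langle v(x),v(y)\rangle$, the conditional variance $\sigma^2(x)=\|v'\|^2-\langle v,v'\rangle^2/\|v\|^2$, the constant $\tfrac{1}{\sqrt{2\pi}}\cdot\sqrt{2/\pi}=\tfrac1\pi$, and the identity $\|w'\|^2=\sigma^2/\|v\|^2$ are all correct. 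The only point worth tightening is the regularity hypothesis for Kac--Rice: ``differentiable'' as stated is a little weaker than what one normally assumes (one wants $C^1$ and non-degeneracy of $h(x)$ on the interval, plus the a.s.\ absence of degenerate zeros, which follows from non-degeneracy of the pair $(h(x),h'(x))$); also the zero set of $v$ need not consist of isolated points for arbitrary differentiable $f_k$, though in the application at hand (finitely many cosines) all of this is harmless.
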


Theorem~\ref{thm:edelman} holds for functions $h$ on the real line, 
so we need to translate our two-dimensional problem 
$f$ to an equivalent one formulated on a line. 
In the following first we consider $f$ constrained on an arbitrary \emph{horizontal} 
line at height $t$, which is 
\[
	L_t=\{(x,t): x\in[0,1]\}\quad\text{for}
	\quad t\in[0,1]\;.
\] 
We obtain a function of one variable and can apply Theorem~\ref{thm:edelman}. 
The generalization to arbitrary (non-horizontal) lines as the ones depicted in Figure~\ref{im:slopedline} 
is a straightforward generalization and is discussed in Section~\ref{sec:sloped}
but the underlying idea does not change. 

We need to introduce, in the spirit of Theorem~\ref{thm:edelman}, the following notation:
\begin{gather*}
	w_t(x) = \left(\frac{\cos(k\pi x)\cos(l\pi t)}{\sqrt{\sum_{m, n\in R_{\eps}} \cos^2 (m \pi x) \cos^2 (n \pi t)}}\right)_{(k,l)\in R_\eps}\\
	W_t(x) =  \left\| \left( \frac{\d}{{\d x}} w_t (x) \right)_{(k,l)\in R_\eps} \right\|^2 = \frac{S_3}{S_1}-\left(\frac{S_2}{S_1}\right)^2,
\end{gather*}
where we have
\begin{align*}
	S_1 & = \sum_{m, n\in R_{\eps}} \cos^2 (m \pi x) \cos^2 (n \pi t)\\
	S_2 & = \sum_{m, n\in R_{\eps}} m \pi \cos (m \pi x) \sin (m \pi x) \cos^2 (n \pi t)\\
	S_3 & = \sum_{m,n\in R_{\eps}}m^2\pi^2\sin^2(m\pi x)\cos^2(n\pi t).
\end{align*}

\begin{theorem}\label{thm:main}
	For any $\gamma\in(0,1)$ and any horizontal 
	line $L_t$ for $x,t\in (0,1)$ the function $W_t(x)$ defined on $L_t$
	behaves asymptotically as $(2\eps)^{-2}$ for $\eps \to 0$.
\end{theorem}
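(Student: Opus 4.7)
The plan is to reduce the claim to an explicit asymptotic computation of $S_1$ and $S_3$, together with a bound showing $S_2$ is lower order. The standard half-angle identities
\begin{align*}
\cos^2(m\pi x) &= \tfrac12\bigl(1+\cos(2m\pi x)\bigr),\\
\cos(m\pi x)\sin(m\pi x) &= \tfrac12\sin(2m\pi x),\\
\sin^2(m\pi x) &= \tfrac12\bigl(1-\cos(2m\pi x)\bigr),
\end{align*}
split each sum into a ``constant'' part (which gives the leading behaviour) and several oscillatory parts of the form $\sum_{(m,n)\in R_\eps} m^j \cos(2m\pi x)$, $\sum m^j \sin(2m\pi x)$, and the corresponding double sums in $x$ and $t$. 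The first step is to carry out this decomposition and identify the would-be leading term of each $S_i$.

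The second step is to compute the leading terms by viewing the sum over $R_\eps$ as a Riemann sum for an area integral over the quarter annulus $\{\alpham <\eps r<\alphap\}$ in polar coordinates. A direct computation gives
\[
|R_\eps|\sim \tfrac{\pi}{4\eps^2}(\alphap^2-\alpham^2),\qquad
\sum_{(m,n)\in R_\eps}m^2 \sim \tfrac{\pi}{16\eps^4}(\alphap^4-\alpham^4),
\]
so that the constant parts yield
\[
S_1\sim \tfrac{|R_\eps|}{4},\qquad S_3\sim \tfrac{\pi^2}{4}\sum m^2,\qquad \frac{S_3}{S_1}\sim \frac{\pi^2(\alphap^2+\alpham^2)}{4\eps^2}=\frac{1}{4\eps^2},
\]
using $\alphap^2+\alpham^2=1/\pi^2$ from the definitions of the parameters. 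This already matches the target $(2\eps)^{-2}$, so what remains is to show that (i) the oscillatory contributions to $S_1$ and $S_3$ are of lower order than the respective leading terms, and (ii) $S_2^2/S_1^2 = o(\eps^{-2})$, which amounts to $S_2 = o(\eps^{-3})$ since $S_1\sim\eps^{-2}$.

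The technical heart of the argument is therefore step (i)--(ii): controlling sums such as $\sum_{(m,n)\in R_\eps} m^j e^{2\pi i m x}$ and their products with sums in $n$, where the index set is not merely growing but also translating to infinity along the ring. For a fixed $x\in(0,1)$ the summands oscillate, so one expects cancellation, but standard Weyl / Riemann-sum heuristics must be replaced by a weighted ergodic theorem that can absorb both the scaling factor $\eps^{-2}$ and the translation of $R_\eps$ to infinity. This is exactly the tool flagged in the abstract, and it is the main obstacle: one needs a quantitative statement that
\[
\eps^{2+j}\!\!\sum_{(m,n)\in R_\eps}\! m^j\cos(2m\pi x)\cos(2n\pi t)\;\longrightarrow\;0
\]
for $j=0,1,2$ and for almost every (indeed every interior) $x,t$, and similarly for the purely $x$- or $t$-dependent pieces. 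Once this is established, plugging the asymptotics of $S_1$, $S_2$, $S_3$ into $W_t(x)=S_3/S_1-(S_2/S_1)^2$ yields the required $(2\eps)^{-2}$ asymptotics.

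The final step is bookkeeping: collect the leading-order computation with the vanishing of the oscillatory remainders to conclude $W_t(x)\sim (2\eps)^{-2}$ uniformly for $(x,t)$ away from the boundary. The generalisation to arbitrary lines, foreshadowed in Section~\ref{sec:sloped}, will follow by the same splitting once $x\mapsto(x\cos\theta,x\sin\theta+c)$ is inserted, and the only change is that the angular factor in the integral approximation picks up the slope of the line.
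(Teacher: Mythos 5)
Your overall strategy is the same as the paper's: reduce everything to the asymptotics of $S_1$, $S_2$, $S_3$, extract the leading constants from the polar-coordinate integral over the quarter annulus (your identity $\alphap^2+\alpham^2=1/\pi^2$ is exactly the computation of $\lambda_{(k^2,1)}(R)/\lambda(R)=1/(4\pi^2)$ carried out in the proof of Lemma~\ref{lem:maincorollary}), and push the whole difficulty into showing that the oscillatory remainders vanish. That part of your write-up is correct, and the arithmetic checks out.

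The gap is that you never prove the statement you yourself identify as ``the technical heart'': the vanishing of
\[
\eps^{2+j}\sum_{(m,n)\in R_\eps} m^j\cos(2m\pi x)\cos(2n\pi t)
\]
and its relatives. You correctly observe that a naive Riemann-sum argument fails because the summand oscillates on the integer scale while the index set both grows and translates to infinity, and you announce that ``a weighted ergodic theorem'' is needed --- but you neither state nor prove one. This is precisely where the paper invests its effort: the weighted averaging condition; Lemma~\ref{lem:domainexpansion}, which transfers convergence from origin-aligned rectangles to the ring via an inner/outer cubical approximation controlled by the measure $\lambda_a$ generated by the weights; Corollary~\ref{cor:weightedaverage}, which removes the weights $k^2$ by an Abel-type resummation; and finally Theorem~\ref{thm:birkhoff} applied to the uniquely ergodic rotation $z\mapsto z+x$ for irrational $x$, with rational $x,t$ either handled explicitly (appendix) or discarded as a Lebesgue-null set since $\delta$ is only ever integrated. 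Without some version of this chain your argument is a reduction, not a proof.

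Two further remarks. First, your half-angle decomposition actually opens an elementary route that would bypass the ergodic machinery: for fixed $x\notin\Z$ the geometric-series bound $\bigl|\sum_{m=M}^{N}e^{2\pi i m x}\bigr|\le 1/|\sin(\pi x)|$ combined with Abel summation gives $\sum_{m=M}^{N}m^j e^{2\pi i m x}=O(N^j)$, one order below the main term $\sim N^{j+1}$, and this survives the fact that on the ring the range $[M,N]$ of $m$ depends on $n$. Carried out carefully this would yield a quantitative proof with rates; but you would have to actually do it. Second, you claim the asymptotics hold \emph{uniformly} away from the boundary: neither your sketch nor the paper establishes uniformity (the constant $1/|\sin(\pi x)|$, respectively the pointwise ergodic limit, degenerates as $x\to 0,1$), and the paper only observes uniform-looking convergence numerically. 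For Theorem~\ref{thm:main} as stated, pointwise convergence for fixed $x,t\in(0,1)$ is all that is required.
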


This means in particular that the average number of zeros 
is $(2\pi \eps)^{-1}$, so the mean pattern size 
(i.e. the average distance from any zero on any line to the next zero) is $2\pi \eps$.  

\begin{remark}	
	It is a remarkable fact that the result of Theorem \ref{thm:main} is independent of $\gamma$,
	because the number of Fourier modes involved is much smaller for $\gamma \approx 1$ than for $\gamma \approx 0$. 
	As we can see in Figure~\ref{im:2gammas}, while the average asymptotic pattern size along lines remains the same,
	the domain with higher $\gamma$ looks more organized. The pattern seems
	to be ``more regular'' in some sense that our method cannot detect.
\end{remark}

\begin{figure}[htbp!]
	\vspace*{-3mm}
	\centering
	\includegraphics[width=.78\textwidth]{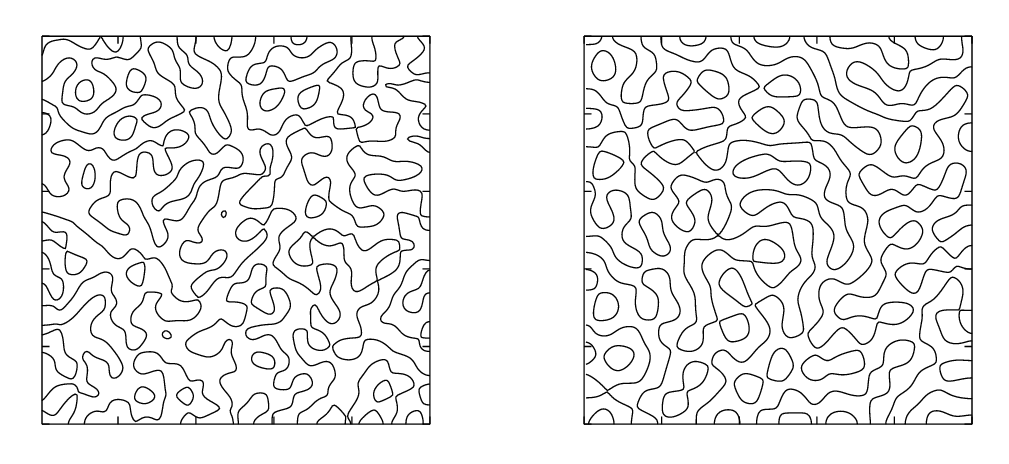}
	\vspace*{-2mm}
	\caption{Patterns generated in the Fourier domains $R_{0.01}^{0.1}$ (left) and $R_{0.01}^{0.9}$ (right)}
	\label{im:2gammas}
\end{figure}

For the proof of Theorem~\ref{thm:main} we have to show that 
\begin{align*}
	\lim\limits_{\eps\to 0} \eps\frac{S_2}{S_1}=0 & & \lim\limits_{\eps\to 0} \eps^2\frac{S_3}{S_1}=\frac{1}{4},
\end{align*}
which we do in Lemma~\ref{lem:maincorollary}. The main idea behind the proof is that the sums $S_i$, $i=1,2,3$ can be calculated in the limit via ergodic-type theorems.

\begin{remark}
	From here onwards we fix $x,t\in (0,1)$, as the functions behave a little differently on the borders. If we have $t=0,1$, we lose all the terms in $t$, that become identically 1, but we obtain anyway the same result.
	The cases $x=0,1$ are not interesting, 
	as the $\sin$ is always zero and thus $W_t(0)=W_t(1)=0$. 
	But since we are looking for a density in terms of $x$, considering it in a single point does not provide any useful information.
\end{remark}

We will prove the main result in Section \ref{sec:proof}. 
After that in Section \ref{sec:General} we state the generalization 
of the main result to general domains in Fourier space different from the quarter-ring 
and give a few explicit examples that the main result is still true, although the patterns appearing 
might look quite different.

As our result is a purely asymptotic one, we 
give a few examples in Section~\ref{sec:General}, where the number of zeros agrees very well with the asymptotic prediction even for moderate $\eps$. 
 In Section~\ref{sec:numerical},  we calculate numerically the functions $\delta(x)$ or $W_t(x)$ 
for different values of $\eps$.  There (cf. also Figure~\ref{im:deltas}) 
we see already for moderate $\eps$  a fast and uniform convergence apart from any small layer at the boundary.

Finally in Section~\ref{sec:sloped} we briefly discuss the case of sloped lines, 
which is a straightforward generalization, up to some additional technicalities. 
In the Appendix we show that we can establish convergence in the rational case not treated in the proof, where we cannot use Birkhoff's ergodic theorem.

Let us comment on further generalizations that are within the scope of this approach.
First we can easily incorporate the case where the Gaussians are not 
all identically distributed. For instance when the noise 
is diagonal w.r.t. the cosine modes,   
we just obtain additional weights in the main result, coming from the eigenvalues of the covariance operator of the noise. 

For different domains in Fourier space we always get the same order of zeros on any line through the domain,
but as we can see in Figure~\ref{im:4cases} the pattern might look quite different. 
Further characterizations using Betti numbers as in~\cite{gammiswan05},
the Minkowski-functionals as in \cite{AdlerLNM}
or bounds on the radii of balls in the nodal domains as in~\cite{maier1998spinodal}
might help to understand why the pattern in spinodal decomposition is so special, as it 
appears to be quite regular, but not too symmetric.

Moreover, we believe that the result on the square $[0,1]^2$ is not that special,
and with the same method we should be able to treat three or higher dimensional problems.
We have to stress that there are technical problems extending the result to general domains,
especially when the boundary has a complicated geometry.
In the proof for arbitrary domains we only need to replace the cosines by the
corresponding eigenfunctions. 
We should then use the ergodic theorem to recover the convergence of the sums, but there is no guarantee that we can still do that,
the main technical problem being that we cannot easily isolate the wavenumber as an argument of the function.   
Moreover, even if we could, the reduction of the two-dimensional Birkhoff ergodic theorem 
to the one dimensional case is not always possible: 
we would need to prove a true two-dimensional ergodic theorem,
but such results are available in the literature.

\section{Proof of the main result}
\label{sec:proof}

The main result is on one hand based on an application of the theorem by Edelman and Kostlan.
On the other hand, for the asymptotic behaviour we need the convergence 
of series over growing domains and the value of the limit. 
This is established by a two-dimensional weighted Birkhoff ergodic theorem.
There is a vast literature on ergodic theorems, and the theory is far developed,
so we do not attempt to give an overview.
We just refer to \cite{GoNe:09,GoNe:15} for ergodic theorems on abstract groups or subgroups of lattices. 

In the following we first state the one dimensional ergodic theorem,
then give a direct prove for a weighted ergodic theorem on arbitrary domains based 
on the analogous result on squares (see for example~\cite{hanson1969mean}).  
We also give an elementary proof that the usual ergodic theorem implies 
(under some conditions on the weights) a weighted version.
Let us remark that the results we need are not in the usual setting of weighted ergodic theorems, as for example  in the theory of ``good weights'' (see for example~\cite{baxter1983weighted, bellow1985weighted}), because we allow the weights to grow and furthermore we change the normalising constant in front of the sum.

\subsection{Ergodic theorem} 
 
Given a $\sigma$-algebra, a transformation $T$ is said to be \emph{uniquely ergodic} if it has a unique ergodic measure. The map $z\mapsto z+\alpha$ on the unit circle is uniquely ergodic if and only if $\alpha$ is irrational. In this case the unique ergodic measure is the Lebesgue measure. 

\begin{theorem}[Birkhoff ergodic theorem, see~\cite{corfomsin}]\label{thm:birkhoff}
	Let $(X,\mu)$ be a probability space. If $T$ is $\mu$-invariant and ergodic and $g$ is integrable, then for a.e. $z\in X$ 
	\begin{equation}\label{eq:thmbirkhoff}
		\lim_{N\to \infty} \frac{1}{N}\sum_{k=1}^N g\left(T^k \left(z\right)\right) = \int_X g(\zeta)\d \mu(\zeta).
	\end{equation}
	Moreover if $T$ is continuous and uniquely ergodic with measure $\mu$ and if $g$ is continuous, then~\eqref{eq:thmbirkhoff} holds for all $z\in X$ (instead of a.e.).
\end{theorem}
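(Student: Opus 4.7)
The theorem has two distinct parts that I would handle separately: (i) the almost-everywhere convergence for integrable $g$ under ergodicity of a measure-preserving $T$, and (ii) the everywhere convergence for continuous $g$ under the stronger hypothesis of unique ergodicity and continuity of $T$. Both are classical, so I sketch the standard strategies and emphasise where the difficulty lies.

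For (i), the central tool is the maximal ergodic theorem. Writing $S_N g(z) := \sum_{k=0}^{N-1} g(T^k z)$, I would first establish that for every $\alpha > 0$ and every $g \in L^1(\mu)$, the set $E_\alpha := \{ z : \sup_N N^{-1} S_N g(z) > \alpha \}$ satisfies $\alpha\, \mu(E_\alpha) \le \int_{E_\alpha} g \, d\mu$. With this maximal inequality in hand, I would apply it to $g - c$ and to $-(g - c)$ for every rational $c$ to obtain, for almost every $z$, coincidence of $\limsup_N N^{-1} S_N g(z)$ and $\liminf_N N^{-1} S_N g(z)$; this produces an a.e. limit $\bar g(z)$. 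The function $\bar g$ is $T$-invariant where defined, hence constant a.e. by ergodicity. Integrating—after a routine truncation reducing to bounded $g$ and invoking dominated convergence—identifies the constant as $\int_X g \, d\mu$.

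For (ii), I would use a soft argument on empirical measures. Fix $z \in X$ and set $\mu_N^z := N^{-1} \sum_{k=1}^{N} \delta_{T^k z}$, viewed as a probability measure on the compact space $X$. A short telescoping estimate using continuity of $T$ gives $|\mu_N^z(\varphi) - \mu_N^z(\varphi \circ T)| \le 2 N^{-1} \|\varphi\|_\infty$ for any continuous $\varphi$, so every weak-$*$ accumulation point of $\{\mu_N^z\}_N$ is $T$-invariant. By unique ergodicity, the only such measure is $\mu$, so $\mu_N^z \to \mu$ in the weak-$*$ sense along the full sequence, i.e.\ $N^{-1} S_N g(z) \to \int_X g \, d\mu$ for \emph{every} $z \in X$ and every continuous $g$. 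If one further needs uniformity in $z$, a contradiction argument combined with weak-$*$ compactness of the space of probability measures on $X$ would extract a second $T$-invariant measure, violating uniqueness.

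The main obstacle is the maximal ergodic inequality underlying (i), whose proof is decidedly not routine despite its short statement. I would follow Garsia's telescoping approach: setting $f_N := \max(S_1 g, \ldots, S_N g)$, one verifies the pointwise bound $g \ge f_N - f_N \circ T$ on $\{f_N > 0\}$, and integrating over this invariant-up-to-$T$-distortion set together with measure preservation yields $\int_{\{f_N > 0\}} g \, d\mu \ge 0$; letting $N \to \infty$ gives the claimed inequality. Once this is established, the remainder of (i) is essentially formal, and (ii) follows from unique ergodicity functioning as a rigidity statement on the space of $T$-invariant probability measures.
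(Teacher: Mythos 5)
The paper contains no proof of this statement at all: Theorem~\ref{thm:birkhoff} is quoted as a classical result with a pointer to \cite{corfomsin}, and is then simply \emph{used} (the unique-ergodicity clause, for irrational rotations of the circle, is what makes the convergence hold for the specific points $x,t$ needed in Lemma~\ref{lem:maincorollary}). So there is no internal argument to compare with; the only question is whether your sketch would pass as a proof of the classical theorem. It essentially would: Garsia's telescoping proof of the maximal ergodic inequality, the rational-comparison argument forcing $\limsup=\liminf$ almost everywhere, constancy of the invariant limit under ergodicity, and the empirical-measure/weak-$*$ compactness argument for unique ergodicity constitute exactly the canonical route found in the standard references.

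Three small repairs are worth flagging. First, the pointwise bound in your Garsia step should use the positive part: on $\{f_N>0\}$ one has $g \ge f_N - f_N^+\circ T$ with $f_N^+ = \max(f_N,0)$, whereas your inequality $g \ge f_N - f_N\circ T$ can fail at points where $f_N\circ T < 0$ (there the maximum may be attained at $S_1 g = g$, and subtracting a negative $f_N\circ T$ overshoots $g$). The positive part is also what makes the integration close up, since $\int_{\{f_N>0\}} \bigl(f_N - f_N^+\circ T\bigr)\,\d\mu \ge \int_X \bigl(f_N^+ - f_N^+\circ T\bigr)\,\d\mu = 0$ by measure preservation. Second, to separate $\liminf$ from $\limsup$ you need \emph{pairs} of rationals $a<b$, applying the maximal inequality to $g-b$ and to $a-g$ on the invariant set where the $\liminf$ lies below $a$ and the $\limsup$ above $b$; a single rational $c$ does not produce the contradiction. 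Third, part (ii) silently assumes $X$ is a compact metric space — this is what makes the empirical measures weak-$*$ sequentially compact and ``continuous'' meaningful — while the theorem as stated in the paper says only ``probability space''; you should state this hypothesis explicitly. None of these affects the substance: in the paper's application $X$ is the unit circle with an irrational rotation, where compactness holds and where, incidentally, the everywhere-convergence for the trigonometric averages could also be obtained directly from Weyl equidistribution without the general machinery.
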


\begin{remark}
	Having the result to hold for all initial conditions is of paramount importance, 
	as we require it to hold for some specific initial values.
	We will then look for uniquely ergodic transformations.
\end{remark}

\begin{remark}
	If $X$ has dimension $2$, Theorem~\ref{thm:birkhoff} can be stated in the following form. 
	See for example Theorem 1.9 of \cite{GoNe:09} for an abstract version of this theorem on groups.
	\[
		\lim_{N\to\infty}\frac{1}{N^2}\sum_{k,l \in \{1,\ldots, N\}^2} (g\circ T^{(k,l)})(y,v) = \iint_X g \d \mu
	\]
	Nevertheless, we do not need an abstract proof of a two-dimensional ergodic theorem, 
	as all our functions factor, so we can always reduce the ergodic theorem on the square directly to 
	the one-dimensional case.
\end{remark}

\subsection{The weighted averaging condition}

First we draft a necessary requirement for averaged weighted sums fulfilling 
an ergodic-type property on a rectangle-shaped summation domain. 
Then we show that this can be used to obtain summation on more general domains. 
In our case this is a quarter-ring-shaped subset in $\N^2$.

After that we show that the sums we need to calculate all fulfil this requirement.

\begin{requirement}[Weighted averaging condition]
	Let $([0,1]^d, \lambda)$ be the probability space with the Lebesgue-measure $\lambda$. 
	We say that $(f, (a_m))$ with $f:[0,1]^d\to\R$ continuous and 
	extended by periodicity to $\R^d$ and $a_m\in\R$ fulfils the weighted averaging condition, 
	if for every $x^0\in [0,1]^d$, every $\alpha\in\N^d$ and 
	\[
		Q_L = \bigotimes_{i=1}^d [1,\ldots, \alpha_i L] \cap \N^d,
	\]
	the following assumption holds:
	\[
		\frac1{\sum_{m\in Q_L}a_m}\sum_{m\in Q_L} a_m\cdot f(m_1 x^0_1,m_2 x_2^0,\ldots, m_d x_d^0)
		\xrightarrow{L\to\infty} \int_{[0,1]^d} f(x)\d x\;.
	\]
\end{requirement}

For any open set $M\subset\R_+^d$ 
we define 
\[
	M_L = (L\cdot M)\cap \N^d,
\] 
the projection on the positive integers of its scaled version. 
We define 
\[
|M_L|_a = \sum_{m\in M_L} a_m
\]
and denote by 
$|M_L| $ the cardinality of $M_L $.

\begin{requirement}[Generation of measures]
	We require that the weights $a = (a_m)$ generate a measure $\lambda_a$ on $\R^d_+$ 
	which is equivalent to the Lebesgue measure $\lambda$,
	i.e. there exists an $\alpha > 0$ such that
	for each set with open interior $M\subset \R_+^d$, 
	\[
		L^{-\alpha} |M_L|_a \xrightarrow{L\to\infty} \lambda_a(M).
	\]
\end{requirement}
Let us remark that in general, we could get the result for a weaker assumption 
on the measure $\lambda_a$ than being equivalent to Lebesgue measure.
But as in all our examples this is the case, we assume this for simplicity of presentation.

The trivial example for the generation of measures are the constant 
weights $a_{k,l}=1$ that generate the Lebesgue measure
with $\alpha=d$. 

\begin{example}
	 An example that is used frequently later is $a_{k,l}=k^2$ for dimension $d=2$. 
	 Then by Riemann sum approximation
	\[
		L^{-4}|M_L|_a 
		= L^{-4} \sum_{(k,l)\in L\cdot M} k^2 
		=   \sum_{(k,l)\in M \cap\frac1{L}\N^2} k^2 L^{-2}
		\xrightarrow{L\to\infty} \int_M \xi^2 \d(\xi,\eta)\;.
	\]
	Thus the measure $\lambda_{(k^2,1)}$ generated by the weight has a Lebesgue-density $ (\xi,\eta) \mapsto \xi^2$.
	As the density is up to the Lebesgue null set $\xi=0$  everywhere strictly positive, 
	the measures $\lambda_{(k^2,1)}$ and $\lambda$ are equivalent. 
\end{example}

\begin{lemma}\label{lem:domainexpansion}
	Let $(f, (a_m))$ fulfil the weighted averaging condition
	with the weights $a$ generating a measure. 
	Then for any open measurable set $ S\subset\R_+^d$ 
	\[
		\frac1{|S_L|_a}\sum_{m\in S_L} a_m\cdot f(m_1 x^0_1,m_2 x_2^0,\ldots, m_d x_d^0)
		\xrightarrow{L\to\infty} \int_{[0,1]^d}f(x)\d x\;.
	\]
\end{lemma}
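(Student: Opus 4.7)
The plan is to approximate the open set $S$ by a finite disjoint union of rectangles on which the weighted averaging condition can be applied, and then to control the remaining error via the generation-of-measures assumption. The scalar $\alpha$ below denotes the exponent supplied by the generation-of-measures requirement.

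First I would rewrite the weighted averaging condition in an ``absolute'' form. Combining it with the convergence $L^{-\alpha}|Q_L|_a \to \lambda_a(Q)$ applied to a corner rectangle $Q = [0,\alpha_1]\times\cdots\times[0,\alpha_d]$ with $\alpha_i\in\N$ yields
\[
	L^{-\alpha} \sum_{m \in Q_L} a_m f(m_1 x^0_1, \ldots, m_d x^0_d)
	\xrightarrow{L \to \infty}
	\lambda_a(Q) \int_{[0,1]^d} f(x) \d x,
\]
which is the form I would carry through the rest of the argument. Next, for any rectangle $R = \prod_i [\beta_i, \gamma_i] \subset \R_+^d$ with positive integer corners I would use inclusion-exclusion among corner rectangles of the form $[0,\gamma_i]$ and $[0,\beta_i - 1]$ to express $\sum_{m\in R_L}$ as a signed sum of corner sums, thereby extending the above limit to $R$ with $\lambda_a(R)$ in place of $\lambda_a(Q)$. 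The passage to rational and then real corners is handled by observing that the correction from rounding $L\beta_i, L\gamma_i$ to integers is controlled by the weight of a thin boundary slab, which is $o(L^\alpha)$ by the generation-of-measures assumption.

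Since $\lambda_a$ is equivalent to Lebesgue and $S$ is open, for any $\eps > 0$ I would select a finite disjoint union of rectangles $R^\eps = \bigsqcup_{i=1}^N R_i$ with $R^\eps \subset S$ and $\lambda_a(S \setminus R^\eps) < \eps$. Summing the previous limit over $i = 1, \ldots, N$ gives
\[
	L^{-\alpha} \sum_{m \in R^\eps_L} a_m f(m_1 x^0_1, \ldots, m_d x^0_d)
	\xrightarrow{L \to \infty}
	\lambda_a(R^\eps) \int_{[0,1]^d} f(x) \d x.
\]
Since $f$ is continuous and periodic and hence bounded by some $M$, the remainder satisfies $\bigl|\sum_{m \in S_L \setminus R^\eps_L} a_m f(m x^0)\bigr| \leq M(|S_L|_a - |R^\eps_L|_a)$, whose limsup divided by $L^\alpha$ is at most $M\eps$. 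Dividing the combined limit by $|S_L|_a \sim L^\alpha \lambda_a(S)$ and sending $\eps \to 0$ gives the claim.

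\textbf{Main obstacle.} I expect the most delicate step to be the extension from corner rectangles to arbitrary rectangles: the weighted averaging condition is anchored at the integer $1$ and the weights $a_m$ are in general non-constant, so translated rectangles cannot be addressed directly from the hypothesis. The inclusion-exclusion reduction, combined with the $o(L^\alpha)$ control on thin boundary slabs provided by the generation-of-measures assumption, is what allows this reduction to go through.
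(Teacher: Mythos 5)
Your proposal is correct and follows essentially the same route as the paper's proof: extend the weighted averaging condition from origin-anchored rectangles to general rectangles by inclusion--exclusion, approximate $S$ by a finite union of such rectangles, and control the discrepancy between $S_L$ and the rectangle approximation using the generation-of-measures assumption together with the boundedness of $f$. Your explicit treatment of the rounding of non-integer corners and the use of the $L^{-\alpha}$-normalized (``absolute'') form are minor presentational refinements, not a different argument.
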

\begin{proof}
	We prove this in two steps. We call $S_L$ the summation domain. 
	First we show that the weighted averaging condition also holds 
	for summation domains which are rectangles not aligned at the origin 
	(i.e. we shift the $Q_L$ out of the origin). Second, we cover $S$
	by a disjoint union of such rectangles and conclude the proof.
	\begin{figure}[htbp!]
		\centering
		\begin{tikzpicture}
			\draw (0,0) rectangle +(5,4);
			\draw[dashed] (0.1, 0.1) rectangle + (4.8,2);
			\draw[dotted] (0.2, 0.2) rectangle + (1, 3.7);
			\draw[dashdotted] (0.3, 0.3) rectangle + (0.8, 1.7);
			\draw (1.3, 2.2) rectangle + (3.6, 1.7);
			\draw (2.4, 1) node {$Q_L^1$};
			\draw (0.7, 2.8) node {$Q_L^2$};
			\draw (5.5, 2) node {$Q_L^3$};
			\draw (3, 3) node {$R_L$};
		\end{tikzpicture}
		\caption{Construction of $R_L$ from rectangles with one vertex in the origin: $R_L = Q_L^3\setminus (Q_L^1\cup Q_L^2)$}
		\label{im:construction}
	\end{figure}
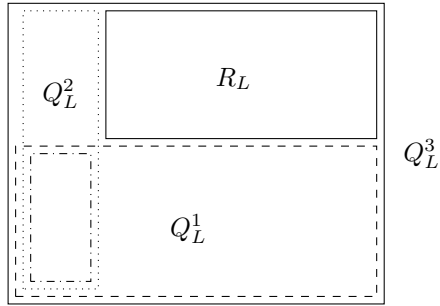
	Consider the scaled rectangle
	$R_L = \otimes_{i=1}^d \{\beta_i L,\ldots,\gamma_i L\}\subset \N^d$.
	Then $R_L$ can be constructed by making unions and subtractions of origin-aligned rectangles 
	of the form $Q_L$ as depicted in Figure \ref{im:construction}. 
	Thus
	\[
		\frac1{|R_L|_a}\sum_{m\in R_L} a_m\cdot f(m_1 x^0_1,m_2 x_2^0,\ldots, m_d x_d^0)
		\xrightarrow{L\to\infty} \int_{[0,1]^d} f(x)\d x,
	\]
	follows immediately.
	
	For every approximation threshold $\delta > 0$ there are finite sets of squares
	$\{P_i\}_i$ and $\{O_i\}_i$ fulfilling
	\[
		{M}_\delta = \bigcup_i {P}_i \subset {S} \subset \bigcup_i {O}_i 
		= {N}_\delta.
	\]
	with $\lambda({N}_\delta \setminus S) \leq \lambda({N}_\delta \setminus {M}_\delta) < \delta$. 
	Write $(M_{\delta, L} = {M}_\delta \cdot L)\cap \N^d$, 
	$P_{i, L} = ({P}_i\cdot L)\cap \N^d$ and $N_{\delta, L} = ({N}_\delta \cdot L)\cap \N^d$ 
	analogously to ${S}$ and $S_L$. We write $f(m\odot x^0) = f(m_1 x^0_1,m_2 x_2^0,\ldots, m_d x_d^0)$ for brevity.
	Then we can derive the following error estimate
	\begin{equation*}
		\begin{split}
			\operatorname{Err} &=\left| \frac1{|N_{\delta, L}|_a}\sum_{m\in N_{\delta, L}}a_m f\left(m\odot x^0\right) - \frac1{|S_L|_a}\sum_{m\in S_L}a_mf\left(m\odot x^0\right) \right| \\
			&=\frac{1}{|N_{\delta, L}|_a\cdot|S_L|_a}\left| |S_L|_a \sum_{m\in N_{\delta, L}} a_mf\left(m\odot x^0\right) - | N_{\delta, L}|_a\sum_{m\in S_L}a_mf\left(m\odot x^0\right)\right|\\
			&=\frac{1}{|N_{\delta, L}|_a\cdot|S_L|_a}\left| \left(|N_{\delta, L}|_a-|S_L|_a\right)\sum_{m\in  S_L}a_mf\left(m\odot x^0\right) +|S_L|_a\sum_{m\in N_{\delta, L}\setminus S_L}a_m f\left(m\odot x^0\right)\right|\\
			&\leq\frac{1}{|N_{\delta, L}|_a\cdot|S_L|_a}\left| \left(|N_{\delta, L}|_a-|S_L|_a\right)\|f\|_\infty \! \sum_{m\in  S_L}a_m\right| +\frac{1}{|N_{\delta, L}|_a\cdot|S_L|_a} \left||S_L|_a\cdot \|f\|_\infty \! \!\!\sum_{m\in N_{\delta, L}\setminus S_L}a_m \right|\\
			 &= 2\|f\|_\infty \cdot \frac{| |N_{\delta, L}|_a - |S_L|_a|}{|N_{\delta, L}|_a} 
			 = 2\|f\|_\infty \cdot \frac{| N_{\delta, L}\setminus S_L|_a}{|N_{\delta, L}|_a} \xrightarrow{L\to\infty} 2\|f\|_\infty \cdot \frac{\lambda_a(N_\delta\setminus S)}{\lambda_a(N_\delta)} 
			 \xrightarrow{\delta\to0}0, 
		\end{split}
	\end{equation*}
	where in the last step we use that the weight $a$ generates a measure $\lambda_a$ 
	equivalent to the Lebesgue measure. 
	Thus $\lambda_a(N_\delta) \to \lambda_a(S) >0$ and   $\lambda_a(N_\delta\setminus S) \to\lambda_a(\varnothing)=0$.
	
	Hence the approximation error has for all $\delta>0$ a limit for $L\to\infty$ 
	which is arbitrarily small for $\delta\to0$, and it remains to check the limit on 
	the cubical approximation $N_{\delta}$ instead of $S$, but this is straightforward:
	\[
		 \frac{1}{|N_{\delta, L} |}  \sum_{m\in M_{\delta, L}} f (m\odot x^0) =
		 \frac1{|N_{\delta,L}|}\sum_{\substack{P_{i,L} \\
		 \cup_iP_{i,L} =N_{\delta, L}}} |P_{i,L}|\cdot \frac1{|P_{i,L}|}\sum_{m\in P_{i,L}}f(m\odot x^0)
		 \xrightarrow{L\to\infty} \int_{[0,1]^d} f(x) \d x,
	\]
	according to the first step of this proof.
\end{proof}

\subsection{Weighted averages}

Here we present an elementary result showing that if the averages converge
then the weighted averages do, too.

\begin{lemma}\label{lem:weightedzerosequence}
	Given two sequences $\{b_k\}_k$ and $\{f_k\}_k$, such that $b_k \geq 0$ for all $k$,  if 
	\[
		F_N = \frac1{N}\sum_{k=1}^N f_k \xrightarrow{N\to\infty} 0,
	\]   
	and also $\frac{a_N \cdot N}{\sum_{k=1}^N a_k}\xrightarrow{N\to\infty} a$, for some $a\in \R$, with $a_k = \sum_{n=1}^k b_n$, and $\sum_{k=1}^Na_k\xrightarrow{N\to\infty} \infty$,
	then
	\[
		\frac{\sum_{k=1}^N a_k\cdot  f_k}{\sum_{k=1}^N a_k} \xrightarrow{N\to\infty} 0.
	\]
\end{lemma}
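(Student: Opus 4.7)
The plan is to apply Abel summation (summation by parts) to rewrite the weighted sum in terms of the ordinary partial sums $S_n := \sum_{j=1}^n f_j$, which satisfy $S_n/n = F_n \to 0$ by assumption, and then control everything using the regularity hypothesis $a_N\, N/T_N \to a$, where $T_N := \sum_{k=1}^N a_k$. The positivity of the $b_n$'s (hence monotonicity of the $a_k$'s) and the growth $T_N \to \infty$ are used for elementary bookkeeping and as denominators.

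Writing $a_k = \sum_{n=1}^k b_n$ and exchanging the order of summation, I obtain the identity
\[
\sum_{k=1}^N a_k f_k \;=\; \sum_{n=1}^N b_n \sum_{k=n}^N f_k \;=\; a_N S_N - \sum_{n=1}^N b_n S_{n-1}.
\]
Dividing by $T_N$, the first term becomes $(a_N N / T_N)\, F_N$, which tends to $a\cdot 0 = 0$ by the two hypotheses. The task therefore reduces to showing that $T_N^{-1}\sum_{n=1}^N b_n S_{n-1} \to 0$.

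For this, the key auxiliary identity, obtained by the same Fubini-style swap $n = \sum_{k=1}^n 1$, is
\[
\sum_{n=1}^N n\, b_n \;=\; (N+1)\, a_N - T_N,
\]
so that $T_N^{-1}\sum_{n=1}^N n b_n \to a - 1$; in particular $\sum_{n=1}^N n b_n = O(T_N)$. Given $\eps > 0$, I fix $K$ large enough that $|F_{n-1}| < \eps$ for all $n \geq K$, split $\sum_{n=1}^N b_n S_{n-1}$ at $K$, bound the initial block by a constant independent of $N$ (which disappears after dividing by $T_N \to \infty$), and bound the tail by
\[
\Bigl|\sum_{n=K}^N b_n S_{n-1}\Bigr| \;\leq\; \eps \sum_{n=K}^N n\, b_n \;\leq\; C\,\eps\, T_N
\]
eventually. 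Sending first $N \to \infty$ and then $\eps \to 0$ finishes the argument.

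The main obstacle is really just identifying $(N+1) a_N - T_N$ as the correct auxiliary quantity and recognising that the hypothesis $a_N N / T_N \to a$ is exactly what forces $\sum_{n=1}^N n b_n$ to be $O(T_N)$. Without such a bound the mass of the weights $b_n$ could accumulate near $n = N$, and the tail of $\sum b_n S_{n-1}$ could outweigh the smallness of $F_{n-1}$. Once the $O(T_N)$ estimate is in hand, the remainder is the standard $\eps$--$K$ splitting familiar from the Toeplitz–Silverman theorem on regular summation kernels.
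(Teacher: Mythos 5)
Your proof is correct and follows essentially the same route as the paper's: the same Fubini swap $\sum_k a_k f_k = \sum_n b_n \sum_{k\ge n} f_k$, the same auxiliary identity $\sum_n n b_n = N a_N - \sum_{n<N} a_n$ to get $\sum_n n b_n = O(T_N)$, and the same $\eps$--$K$ splitting of the remainder. The only (cosmetic) difference is that you phrase the second term via the partial sums $S_{n-1}$ rather than via $n F_n$, which amounts to the same estimate.
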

\begin{proof}
	Consider first
	\begin{align*}
		\sum_{n=1}^N n\cdot b_n = \sum_{n=1}^N b_n \cdot N - \sum_{n=1}^{N-1}1 \cdot \sum_{k=1}^nb_k = N \cdot a_N - \sum_{n=1}^{N-1} a_n,
	\end{align*}
	which means that
	\[
		\frac{\sum_{n=1}^N n\cdot b_n}{\sum_{n=1}^N a_n}\xrightarrow{N\to\infty} a-1.
	\]
	Now
	\begin{equation*}
		\begin{split}
			\frac{\sum_{k=1}^N a_k f_k}{\sum_{k=1}^N a_k} &= \frac{\sum_{k=1}^N \sum_{n=1}^k b_n \cdot f_k}{\sum_{k=1}^N a_k}\\
			&= \frac{\sum_{n=1}^N b_n \cdot \sum_{k=n}^N f_k}{\sum_{k=1}^N a_k}\\
			&= \frac{N}{\sum_{k=1}^N a_k} \sum_{n=1}^N \left[b_n \cdot F_N - b_n \cdot \frac{n}{N}\cdot F_{n}\right] \\
			&= F_N\cdot \frac{N \cdot a_N}{\sum_{k=1}^N a_k} - \sum_{n=1}^N F_{n} \cdot \frac{n\cdot b_n}{ \sum_{k=1}^N a_k}. 
		\end{split}
	\end{equation*}
	Note that $\sum_{n=1}^N F_{n} \cdot \frac{n\cdot b_n}{ \sum_{k=1}^N a_k}$ is bounded by a constant $C$ and there is $N_1$ such that $|F_N\cdot \frac{N \cdot a_N}{\sum_{k=1}^N a_k}| < \frac{\eps}{3}$ for $n > N_1$. Also, there exists $N_2$, such that for $n> N_2$ we have 
\[
\left|\sum_{n=1}^{N_2} F_{n} \cdot \frac{n\cdot b_n}{ \sum_{k=1}^N a_k} \right| \leq C\cdot\frac{\sum_{k=1}^{N_2} a_k}{\sum_{k=1}^N a_k}\leq \frac{\eps}{3}.
\]	
Lastly we can choose $N_3 $ such that for $N\geq N_3$
	\[
		\left|\sum_{n=1}^{N_2} F_{n} \cdot \frac{n\cdot b_n}{ \sum_{k=1}^N a_k}\right| < \frac{\eps}{3}.
	\]
	Then
	\begin{align*}
		\left| \frac{\sum_{k=1}^N a_k f_k}{\sum_{k=1}^N a_k}\right| &\leq \left| F_N\cdot \frac{N \cdot a_N}{\sum_{k=1}^N a_k}\right| + \left|\sum_{n=1}^{N_2} F_{n} \cdot \frac{n\cdot b_n}{ \sum_{k=1}^N a_k}\right| + \left|\sum_{n=N_2+1}^{N} F_{n} \cdot \frac{n\cdot b_n}{ \sum_{k=1}^N a_k}\right|\\
		& < \eps
	\end{align*}
	for $n > \max(N_1,N_2,N_3)$.
\end{proof}
The following corollary follows immediately by setting $f = g-C$.
\begin{corollary}\label{cor:weightedaverage}
	Let the coefficients $a_k$ be as in Lemma \ref{lem:weightedzerosequence}. If 
	\[
		\frac1{N}\sum_{k=1}^N g_k \xrightarrow{N\to\infty} C,
	\]
	then
	\[
		\frac{\sum_{k=1}^N a_k\cdot  g_k}{\sum_{k=1}^N a_k} \xrightarrow{N\to\infty} C.
	\]
\end{corollary}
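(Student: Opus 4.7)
The plan is to apply Lemma~\ref{lem:weightedzerosequence} to the shifted sequence $f_k := g_k - C$. First I would check the hypothesis of the lemma: since
\[
F_N = \frac1N\sum_{k=1}^N f_k = \frac1N\sum_{k=1}^N g_k - C \xrightarrow{N\to\infty} 0
\]
by assumption, and the sequence $(a_k)$ is unchanged (so the conditions $a_k = \sum_{n=1}^k b_n$, $b_n\ge0$, $a_N N / \sum_{k=1}^N a_k \to a$, and $\sum a_k \to \infty$ all carry over verbatim), the hypotheses of Lemma~\ref{lem:weightedzerosequence} are satisfied for $\{f_k\}$.

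Next, the conclusion of the lemma yields
\[
\frac{\sum_{k=1}^N a_k f_k}{\sum_{k=1}^N a_k} \xrightarrow{N\to\infty} 0.
\]
Finally, I would use linearity: since $\sum_{k=1}^N a_k f_k = \sum_{k=1}^N a_k g_k - C\sum_{k=1}^N a_k$, dividing by $\sum_{k=1}^N a_k$ gives
\[
\frac{\sum_{k=1}^N a_k g_k}{\sum_{k=1}^N a_k} - C \xrightarrow{N\to\infty} 0,
\]
which is exactly the desired conclusion.

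There is no substantive obstacle here; the corollary is a formal consequence of Lemma~\ref{lem:weightedzerosequence} via the standard trick of subtracting the limit. The only thing worth double-checking is that the structural conditions on $(a_k)$ in the lemma (positivity of the increments $b_n$, growth of $a_N N/\sum a_k$, divergence of $\sum a_k$) depend only on the weights and not on the averaged sequence, so they are preserved when passing from $g_k$ to $f_k = g_k - C$. Once this is noted, the proof is a one-line reduction.
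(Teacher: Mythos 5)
Your proposal is correct and is exactly the paper's argument: the paper proves the corollary by the same substitution $f = g - C$ and appeal to Lemma~\ref{lem:weightedzerosequence}, merely stating it in one line where you spell out the (routine) verification of the hypotheses.
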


\subsection{Asymptotic behaviour of the \texorpdfstring{$S_i$}{Si}}

Now we turn to the main technical tool to prove the main result.

\begin{lemma}\label{lem:maincorollary} 
	For $x,t\in (0,1)$,
	\begin{gather}
		\label{eq:S1}
		\lim_{\eps\to 0} \frac{1}{|R_\eps|}\cdot S_1 = \frac{1}{4}\\
		\label{eq:S3}
		\lim_{\eps\to 0} \eps^2\cdot\frac{S_3}{S_1}= \frac{1}{4}\\
		\label{eq:S2}
		\lim_{\eps\to 0}\eps\cdot\frac{S_2}{S_1} = 0.
	\end{gather}
\end{lemma}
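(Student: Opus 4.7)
The plan is to rewrite each $S_i$ using double-angle identities so that it becomes a finite linear combination of weighted sums of the form
\[
T(a,\phi,\psi;\eps)=\sum_{(k,l)\in R_\eps} a_{k,l}\,\phi(k x)\,\psi(l t),
\]
with monomial weights $a_{k,l}\in\{1,k\pi,k^2\pi^2\}$ for $i=1,2,3$ and factors $\phi,\psi\in\{1,\cos(2\pi\,\cdot),\sin(2\pi\,\cdot)\}$. Applying $\cos^2\theta=\tfrac12(1+\cos 2\theta)$, $\sin^2\theta=\tfrac12(1-\cos 2\theta)$ and $\sin\theta\cos\theta=\tfrac12\sin 2\theta$ one obtains four such summands for $S_1$ and $S_3$ and two for $S_2$. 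In every $S_i$ exactly one summand is \emph{dominant} (with $\phi=\psi\equiv 1$), while the others carry at least one trigonometric factor of mean zero on $[0,1]$.

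Next I would apply Lemma~\ref{lem:domainexpansion} with $L=1/\eps$ and $S=R$, the continuous quarter-annulus $\{(\xi,\eta)\in\R_+^2:\alpham<\sqrt{\xi^2+\eta^2}<\alphap\}$, so that $R_\eps=(L\cdot R)\cap\N^2$. The weighted averaging condition for each pair $(f,a)$ with $f(y_1,y_2)=\phi(y_1)\psi(y_2)$ and $a_{k,l}\in\{1,k,k^2\}$ follows by separation of variables: the classical Birkhoff theorem (Theorem~\ref{thm:birkhoff}) applied to the uniquely ergodic irrational rotations $z\mapsto z+x$ and $z\mapsto z+t$ controls the unweighted axis, while Corollary~\ref{cor:weightedaverage} controls the weighted one (take $b_n\equiv 1$ to obtain $a_n=n$, and $b_n=2n-1$ to obtain $a_n=n^2$). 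The measure-generation hypothesis is the example already worked out in the text for $k^2$, and is identical for $k$; both $\xi\,d\xi\,d\eta$ and $\xi^2\,d\xi\,d\eta$ are Lebesgue-equivalent on $\R_+^2$. Thus, for each summand,
\[
\frac{1}{|R_\eps|_a}\sum_{(k,l)\in R_\eps} a_{k,l}\,\phi(kx)\,\psi(lt)\ \longrightarrow\ \int_0^1\!\phi(y)\,dy\cdot\int_0^1\!\psi(z)\,dz,
\]
which vanishes unless $\phi=\psi\equiv 1$, in which case it equals $1$.

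Combining these limits with the scalings $|R_\eps|\sim\eps^{-2}\lambda(R)$, $|R_\eps|_k\sim\eps^{-3}\int_R\xi\,dA$ and $|R_\eps|_{k^2}\sim\eps^{-4}\int_R\xi^2\,dA$ (all instances of Lemma~\ref{lem:domainexpansion} on the dominant term), I read off $S_1\sim|R_\eps|/4$, giving~\eqref{eq:S1}; $S_2=o(|R_\eps|_k)=o(\eps^{-3})$, which together with~\eqref{eq:S1} yields~\eqref{eq:S2}; and $S_3\sim(\pi^2/4)|R_\eps|_{k^2}$, so that
\[
\eps^2\,\frac{S_3}{S_1}\ \longrightarrow\ \pi^2\cdot\frac{\int_R \xi^2\,dA}{\lambda(R)}.
\]
A polar-coordinate integration gives $\int_R\xi^2\,dA=\tfrac{\pi}{16}(\alphap^4-\alpham^4)$ and $\lambda(R)=\tfrac{\pi}{4}(\alphap^2-\alpham^2)$, so the ratio collapses to $(\alphap^2+\alpham^2)/4$. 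The definitions of $\alpha_\pm$ yield the key identity $\alphap^2+\alpham^2=1/\pi^2$, so the limit is $\pi^2\cdot\tfrac{1}{4\pi^2}=\tfrac14$, proving~\eqref{eq:S3} and simultaneously explaining the $\gamma$-independence of Theorem~\ref{thm:main}.

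The main obstacle is the verification of the weighted averaging condition for the monomial weights $a_{k,l}=k,k^2$ paired with periodic integrands: this relies on unique ergodicity of the rotations $z\mapsto z+x$ and $z\mapsto z+t$, and therefore presupposes $x,t$ irrational. The rational case, where unique ergodicity breaks down, is the one deferred to the Appendix. The remaining steps — the trigonometric decomposition, the Riemann-sum measure generation for $a_{k,l}\in\{1,k,k^2\}$, and the polar integration — are routine, the only delicate point being the algebraic cancellation $\alphap^2+\alpham^2=1/\pi^2$ that erases all $\gamma$-dependence from the final constant.
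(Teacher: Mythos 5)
Your proposal is correct and follows essentially the same route as the paper's proof: Birkhoff's theorem for the uniquely ergodic irrational rotations in each coordinate, Corollary~\ref{cor:weightedaverage} to absorb the monomial weights $k$ and $k^2$, Lemma~\ref{lem:domainexpansion} to pass from origin-aligned rectangles to the quarter-annulus, and the same polar-coordinate evaluation of $\lambda_{(k^2,1)}(R)/\lambda(R)=\tfrac14(\alphap^2+\alpham^2)=\tfrac1{4\pi^2}$. The only difference is cosmetic: you linearize via double-angle identities before averaging, whereas the paper applies the ergodic theorem directly to $\cos^2$, $\sin^2$ and $\sin\cos$ as $1$-periodic continuous functions; both isolate the same dominant terms and both defer the rational $x,t$ case in the same way.
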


\begin{remark}
	In the following proof we will show that the weighted averaging condition holds for $x,t\notin\Q$ by using Birkhoff's ergodic theorem~\ref{thm:birkhoff}. This leaves out the case when either $x$ or $t$ are in~$\Q$. We have two ways of addressing this issue. We can compute explicitly the terms and obtain analytically the convergence required for the weighted average condition; these are straightforward but tedious computations, a sample of which is provided in the appendix, in Subsection~\ref{sec:rational}. 
	
	At the same time we can observe that it is enough to have the limits almost everywhere in $x,t$, as we are integrating $W_t(x)$ and the Lebesgue integral ignores sets of null measure.
\end{remark}

\begin{proof}[Proof of Lemma~\ref{lem:maincorollary}]
	As discussed above, we will consider only the case $x,t\notin\Q$.
	
	We first prove~\eqref{eq:S1}. We can use Birkhoff's ergodic theorem. 
	Define $T_x(z) = z+x$: this is a measure-preserving and uniquely ergodic transformation (since $x\not\in \Q$). 
	Then
	\[
		\frac{1}{N}\sum_{k=0}^{N-1}\cos^2(\pi k x) = \frac{1}{N}\sum_{k=0}^{N-1}\cos^2\left(\pi T_x^k\left(0\right)\right) \xrightarrow{N\to\infty} \int_0^1\cos^2(\pi x)\d x = \frac{1}{2}.
	\]
	All the coefficients $a_m$ are 1 in this case and the function is multiplicative.
	Then the result follows immediately from Lemma \ref{lem:domainexpansion} and the fact that 
	\[
		\int_{[0,1]^2}\cos^2(\pi x_1)\cos^2(\pi x_2)\d (x_1,x_2) = \frac1{4}.
	\]
	We prove now~\eqref{eq:S3}. For $x,t\not\in\Q$ we know from~\eqref{eq:S1} 
	that the denominator $S_1 \sim \frac{|R_\eps|}{4}$ as $\eps\to 0$ for $x\neq 0,1$. 
	Hence the term in question has the same asymptotic behaviour as
	\begin{equation}\label{e:asy}
		\eps^2\frac{S_3}{S_1} \sim 4\pi^2 \frac{\eps^2}{|R_\eps|}\cdot \sum_{k,l\in R_\eps} k^2\sin^2(k\pi x)\cos^2(l\pi t).
	\end{equation}
	We define $a = (a_{k,l})_{k,l}$ 
	with $a_{k,l} = k^2$, where we already saw in our example that his generates a measure 
	with Lebesgue density 
	such that 
	\begin{equation}\label{eq:quotient}
		\lim_{\eps\to 0}\eps^2\cdot \frac{|R_\eps|_a}{|R_\eps|}
		=  \lim_{\eps\to 0}\eps^2\cdot \frac{1}{|R_\eps|}\sum_{k,l\in R_\eps} k^2 
		=  \frac{\lambda_a(R)}{\lambda(R)},
	\end{equation}
	where the rescaled domain is
	\[
		R  = \{(\eta,\xi)\in\R^2|~ \alpham < \sqrt{\xi^2+\eta^2} <  \alphap\},
	\]
	which gives 
	\[
		R_\eps = \eps^{-1}R\cap \N^2. 
	\]
	As $ \d \lambda_a =  \eta^2 \d(\eta,\xi)$ we obtain by elementary calculations 
	using polar coordinates
	\begin{align*}
		\frac{\lambda_a(R)}{\lambda(R)} &=  \frac4{ \pi (\alphap^2-\alpham^2)} \int_R \eta^2 \d(\eta,\xi)\\
		&=  \frac4{ \pi (\alphap^2-\alpham^2)} \int_0^{\pi/2} \int_{\alpham}^{\alphap} r^3 \cos(\varphi)^2\d r\d\varphi
		=  \frac1{\alphap^2-\alpham^2}\int_{\alpham}^{\alphap} r^3  \d r\\
		&=  \frac14 \frac{\alphap^4-\alpham^4}{\alphap^2-\alpham^2} =  \frac14 (\alphap^2+\alpham^2) = \frac1{4\pi^2}\;.
	\end{align*}
	Combining \eqref{e:asy} and \eqref{eq:quotient}, it remains to show that we can apply the 
	averaged Birkhoff ergodic theorem to the sum.  We already saw that it is sufficient to check this on large rectangles 
	containing the origin.
	We define $Q_L$ as in the weighted averaging condition for $\alpha \in(0,\infty)^2$ so that
	\[
		Q_L =  [1,\alpha_1 L] \times [1,\alpha_2 L] \cap \N^2 
		= I_L^{(1)} \times I_L^{(2)} \quad\text{with} \quad I_L^{(j)} = [1,\alpha_j L] \cap \N \;.
	\]
	Because of the rectangular shape we can split the sum
	\begin{align*}
		 \frac{1}{|Q_L|_a}\cdot \sum_{(k,l)\in Q_L}  k^2\sin^2(k\pi x)\cos^2(l\pi t)
		& = \frac1{\sum_{k\in I_L^{(1)}} k^2} \cdot  \sum_{k\in I_L^{(1)}} k^2\sin^2(k\pi x)
		\cdot \frac{1}{\sum_{l\in I_L^{(2)}} 1}\sum_{l\in I_L^{(2)}} \cos^2(l\pi t)\\
		&\sim \frac1{|I_L^{(1)}|}\cdot  \sum_{k\in I_L^{(1)}} \sin^2(k\pi x)
		\cdot \frac1{|I_L^{(2)}|}\sum_{l\in I_L^{(2)}} \cos^2(l\pi t)\\
		& \xrightarrow{L\to\infty} \frac{1}{4}\;,
	\end{align*}
	by the standard one-dimensional ergodic theorem,
	where we used Corollary~\ref{cor:weightedaverage} to remove the weights. 
	We can now use Lemma~\ref{lem:domainexpansion} to extend this to the ring and obtain
	\[
		\eps^2\cdot \frac{S_3}{S_1}
		\sim 4\pi^2 \eps^2 \cdot \frac{|R_\eps|_a}{|R_\eps|} 
		\cdot \frac{1}{|R_\eps|_a}\cdot \sum_{k,l\in R_\eps}k^2\sin^2(k\pi x)\cos^2(l\pi t)
		\xrightarrow{\eps\to 0} \frac{1}{4},
	\]
	where we evaluated the value of the limit in~\eqref{eq:quotient}.
	
	Finally, we prove~\eqref{eq:S2}. As it was the case in~\eqref{eq:S3}, we can calculate the sum of the coefficients, factor them out and see that the orders of magnitude of $\eps$ cancel out. Then everything is reduced to the integral
	\[
		\int_{[0,1]^2}\cos(\pi x_1)\sin(\pi x_1)\cos^2(\pi x_2) \d(x_1,x_2) = 0.\qedhere
	\]
\end{proof}

\subsection{Proof of main result}
We have now all the ingredients to complete the proof of Theorem~\ref{thm:main}.

\begin{proof}[Proof of Theorem~\ref{thm:main}]
	From Lemma~\ref{lem:maincorollary} we know that
	\begin{equation*}
		W_t(x) = \frac{S_3}{S_1}-\left(\frac{S_2}{S_1}\right)^2\sim \frac{1}{4\eps^2}\qquad \textrm{as}\ \eps\to 0.
	\end{equation*}
	By Theorem~\ref{thm:edelman}, the number of expected zeros on the horizontal line $L_t$ of length 1 is, for a given $\eps$,
	\begin{equation*}
		N = \frac{1}{\pi}\int_0^1 \sqrt{\frac{1}{4\eps^2}}\d x = \frac{1}{2\pi\cdot\eps}.
	\end{equation*}
	This is the same as saying that the average pattern size is $\frac{1}{N}=2\pi\eps$.
\end{proof}

\section{General Fourier domains} \label{sec:General}

We can consider domains in Fourier space that are different from the ring. 
The main difference is that we get a different constant, which depends on the shape of the Fourier-domain.
Also the constant might be different along different directions 
if we lose the symmetry in $k$ and $l$ in the Fourier space, but we will discuss that later in Section \ref{sec:sloped}.
On the other hand for horizontal lines we can give an explicit constant,
in terms of the measure $\lambda_a$, and for vertical lines we can argue by symmetry.
Let us discuss a few explicit examples.

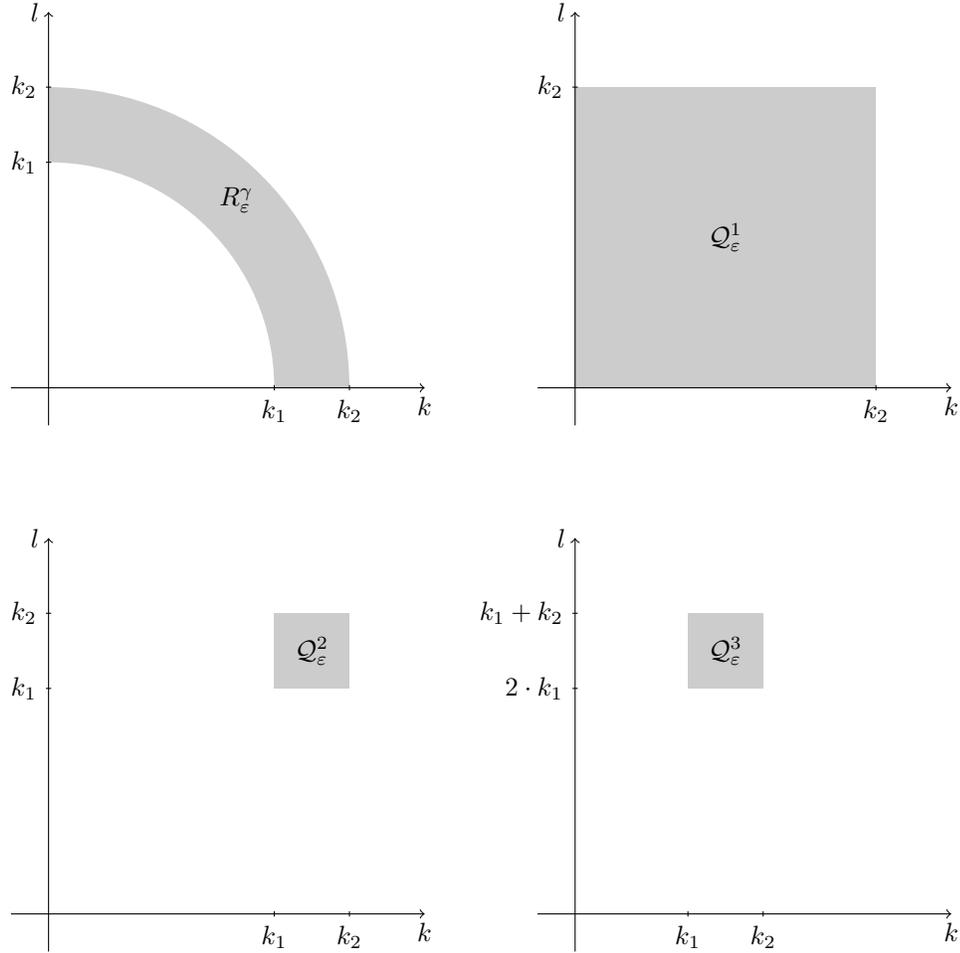
\begin{figure}[t!]
	\begin{tikzpicture}
		\fill[black!20!white] (4, 0) arc (0:90:4) -- (0, 3) arc (90:0:3) circle;
		\draw[->] (-.5, 0) -- (5, 0);
		\draw (5,0) node[anchor=north] {$k$};
		\draw[->]  (0, -.5) -- (0, 5);
		\draw (0,5) node[anchor=east] {$l$};
		\draw (3,1pt) -- (3,-1pt) node[anchor=north] {$k_1$};
		\draw (4,1pt) -- (4,-1pt) node[anchor=north] {$k_2$};
		\draw (1pt, 3) -- (-1pt, 3) node[anchor=east] {$k_1$};
		\draw (1pt, 4) -- (-1pt, 4) node[anchor=east] {$k_2$};
		\draw (2.5, 2.5) node {$R^\gamma_\eps$};
		\begin{scope}[xshift = 7cm]
			\fill[black!20!white] (0,0) rectangle (4, 4);
			\draw[->] (-.5, 0) -- (5, 0);
			\draw (5,0) node[anchor=north] {$k$};
			\draw[->]  (0, -.5) -- (0, 5);
			\draw (0,5) node[anchor=east] {$l$};
			\draw (4,1pt) -- (4,-1pt) node[anchor=north] {$k_2$};
			\draw (1pt, 4) -- (-1pt, 4) node[anchor=east] {$k_2$};
			\draw (2, 2) node {$\mathcal{Q}^1_\eps$};
		\end{scope}
		\begin{scope}[yshift = -7cm]
			\fill[black!20!white] (3,3) rectangle (4, 4);
			\draw[->] (-.5, 0) -- (5, 0);
			\draw (5,0) node[anchor=north] {$k$};
			\draw[->]  (0, -.5) -- (0, 5);
			\draw (0,5) node[anchor=east] {$l$};
			\draw (3,1pt) -- (3,-1pt) node[anchor=north] {$k_1$};
			\draw (4,1pt) -- (4,-1pt) node[anchor=north] {$k_2$};
			\draw (1pt, 3) -- (-1pt, 3) node[anchor=east] {$k_1$};
			\draw (1pt, 4) -- (-1pt, 4) node[anchor=east] {$k_2$};
			\draw (3.5, 3.5) node {$\mathcal{Q}^2_\eps$};
		\end{scope}
		\begin{scope}[xshift = 7cm, yshift = -7cm]
			\fill[black!20!white] (1.5, 3) rectangle (2.5, 4);
			\draw[->] (-.5, 0) -- (5, 0);
			\draw (5,0) node[anchor=north] {$k$};
			\draw[->]  (0, -.5) -- (0, 5);
			\draw (0,5) node[anchor=east] {$l$};
			\draw (1.5,1pt) -- (1.5,-1pt) node[anchor=north] {$k_1$};
			\draw (2.5,1pt) -- (2.5,-1pt) node[anchor=north] {$k_2$};
			\draw (1pt, 3) -- (-1pt, 3) node[anchor=east] {$2\cdot k_1$};
			\draw (1pt, 4) -- (-1pt, 4) node[anchor=east] {$k_1+k_2$};
			\draw (2, 3.5) node {$\mathcal{Q}^3_\eps$};
		\end{scope}
	\end{tikzpicture}
	\caption{The ring $R^\gamma$ and the alternative mode domains $\mathcal{Q}^i$, $i=1,2,3$.}\label{im:domains}
\end{figure}

Consider the following mode domains where we define as before for some $\gamma\in(0,1)$ 
\[k_1= \frac{\alpham}{\eps} = \sqrt{\frac{1-\sqrt{1-\gamma}}{2\pi^2\eps^2}} 
  \quad\text{and}\quad
  k_2= \frac{\alphap}{\eps} = \sqrt{\frac{1+\sqrt{1-\gamma}}{2\pi^2\eps^2}}
\]

\begin{align*}
R^\gamma_\eps&= \{(k,l)\in\N^2|~ k_1 < \sqrt{k^2+l^2} <  k_2\},\\
\mathcal{Q}^1_\eps&= \{(k,l)\in\N^2|~ 0 < k,l <  k_2\},\\
\mathcal{Q}^2_\eps&= \{(k,l)\in\N^2|~ k_1 < k,l <  k_2\},\\
\mathcal{Q}^3_\eps&= \{(k,l)\in\N^2|~ k_1< k <  k_2,\, 2k_1<l< k_1+k_2\}.
\end{align*}
Note that $R^\gamma_\eps$ is the same quarter-ring-shaped domain as before and the others are certain rectangles, all of them represented in Figure~\ref{im:domains}.
For our main result, 
we will also use the scaled  $\eps$-independent versions 
\[
R^\gamma = \{(\eta,\xi)\in\R^2|~ \alpham < \sqrt{\xi^2+\eta^2}  < \alphap\}\\
\]
\[
\mathcal{Q}^1  =(0,\alphap)^2, \quad
\mathcal{Q}^2 = (\alpham, \alphap)^2, \quad
\mathcal{Q}^3 =  (\alpham, \alphap) \times (2\alpham, \alpham +\alphap),
\]
so that
\[
R^\gamma_\eps = \eps^{-1} R^\gamma \cap \N^2
 \quad \text{and}\quad  \mathcal{Q}^i_\eps =\eps^{-1}\mathcal{Q}^i\cap \N^2\text{ for }i=1,2,3\;.
\]

We provide numerical simulations on how the patterns look like on such domains in Figure~\ref{im:4cases}.
They look quite different, although as we will see in the following Lemma~\ref{lem:following},
the main result about the asymptotic distribution of zeros is the same in all four cases, only the constants change.

\begin{lemma}\label{lem:following}
Let $D_\eps = \eps^{-1} D\cap \N^2$ be a scaled domain in Fourier space
(for example $R^\gamma_\eps$ or $\mathcal{Q}^i_\eps$ for $i=1,2,3$).
Then the asymptotic (for $\eps\to 0$) density of zeros $\delta$ is on horizontal lines through the pattern 
\begin{displaymath}
\delta(x) \sim  \frac{1}{2\pi\eps}\cdot
\sqrt{ \frac{\lambda_{(k^2,1)}(D)}{\lambda(D)}},
\end{displaymath}
while on vertical lines it is 
\begin{displaymath}
\delta(x) \sim  \frac{1}{2\pi\eps}\cdot
\sqrt{ \frac{\lambda_{(1,l^2)}(D)}{\lambda(D)}}.
\end{displaymath}
\end{lemma}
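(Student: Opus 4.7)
The plan is to repeat the argument that proves Theorem~\ref{thm:main} almost verbatim, with the quarter-ring $R_\eps$ replaced by the general scaled domain $D_\eps$, and to keep track of the point at which the shape of $D$ enters the answer. Since the proof of the main theorem already used the abstract domain-expansion result Lemma~\ref{lem:domainexpansion} rather than any specific property of the ring, the whole structure transfers without conceptual change.

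Concretely, I would first reintroduce the vector $w_t(x)$, the density function $W_t(x)=S_3/S_1-(S_2/S_1)^2$, and the trigonometric sums $S_1,S_2,S_3$ from Section~\ref{sec:proof}, but now with the index set $D_\eps$ in place of $R_\eps$. Theorem~\ref{thm:edelman} then reduces the problem to analysing the limit of these $S_i$ along a horizontal line $L_t$.

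The second step is to redo the computations of Lemma~\ref{lem:maincorollary} with $D_\eps$. The proof there splits into two independent ingredients: the one-dimensional ergodic averaging of $\cos^2$, $\sin^2$ and $\cos\cdot\sin$ factors on $[0,1]$ (which is insensitive to the shape of the Fourier-space summation set), and the geometric identification $\eps^2\,|R_\eps|_a/|R_\eps|\to\lambda_a(R^\gamma)/\lambda(R^\gamma)$, which encoded the shape. Replacing $R^\gamma$ by $D$ and using the generation-of-measures property for the weight $a_{k,l}=k^2$, the same argument yields $S_1/|D_\eps|\to 1/4$, $\eps\,S_2/S_1\to 0$, and a limit of $\eps^2\,S_3/S_1$ that is proportional to $\lambda_{(k^2,1)}(D)/\lambda(D)$. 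Combining these three limits in $W_t(x)=S_3/S_1-(S_2/S_1)^2$ and feeding the result into the Edelman--Kostlan formula $\delta(x)=\frac{1}{\pi}\sqrt{W_t(x)}$ produces the claimed asymptotic on horizontal lines.

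For vertical lines I would exploit the symmetry $k\leftrightarrow l$ in the definition \eqref{eq:fnc}: the random function $f$ is invariant under exchanging $x$ with $y$ simultaneously with $k$ with $l$. Rerunning the horizontal-line argument along a vertical slice at fixed abscissa therefore reproduces the same computation but with $\sin^2(l\pi y)$ in place of $\sin^2(k\pi x)$ in $S_3$, so that the relevant weight becomes $a_{k,l}=l^2$ and the geometric factor is $\lambda_{(1,l^2)}(D)/\lambda(D)$.

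I do not expect any substantial new obstacle. The one point that needs a mild verification is that the weight $(k^2,1)$ (resp.\ $(1,l^2)$) generates a measure equivalent to Lebesgue measure on the general $D$, so that Lemma~\ref{lem:domainexpansion} applies; this follows for any open $D\subset\R_+^2$ of positive Lebesgue measure by the Riemann-sum argument already given in the example $a_{k,l}=k^2$, since the density $(\xi,\eta)\mapsto\xi^2$ vanishes only on a Lebesgue-null set. As in the main theorem, the case of rational $x$ or $t$ is handled either by the elementary computations indicated in the appendix or by the observation that a.e.\ convergence suffices for identifying $\delta(x)$ up to Lebesgue-null sets.
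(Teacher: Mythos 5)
Your proposal is correct and follows essentially the same route as the paper, whose proof of this lemma is exactly the observation that the argument of Lemma~\ref{lem:maincorollary} carries over verbatim with $R_\eps$ replaced by $D_\eps$, the shape entering only through $\eps^2|D_\eps|_{(k^2,1)}/|D_\eps| \to \lambda_{(k^2,1)}(D)/\lambda(D)$, with the vertical case obtained by the $k\leftrightarrow l$, $x\leftrightarrow y$ symmetry. The only thing worth pinning down more carefully than ``proportional to'' is the constant bookkeeping ($\delta^2 = W_t/\pi^2$ against the $4\pi^2$ produced by $S_3/S_1$), so that the prefactor matches the normalization used in the tables of Section~\ref{sec:General}.
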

\begin{proof}
This is a proof analogous to the one of Lemma~\ref{lem:maincorollary}.
We obtain the following asymptotic equivalences
\[
\delta(x)^2  
\sim \frac{S_3}{S_1} 
\sim \frac{1}{4}\cdot \frac{|D_\eps|_{(k^2,1)}}{|D_\eps|}
\sim \frac{1}{4\eps^2}\cdot \frac{\lambda_{(k^2,1)}(D)}{\lambda(D)}.
\]
The statement on vertical lines simply follows by symmetry. 
\end{proof}

The correction factors $\lambda_a(D)/\lambda(D)$  for the weights $a_{k,l}=k^2$ 
and the pattern sizes in the following domains are 
\begin{center}
{\setlength\extrarowheight{10pt}
\begin{tabular}{r|l|l|l}
\textnormal{Domain} & \textnormal{Correction coeff.} & \textnormal{Avg. number of zeros} & \textnormal{Avg. pattern size} \\
\hline
$R^\gamma_\eps$ & $1$ & $\dfrac{1}{2\pi \eps}$ & $2\pi \eps$\\[8pt]
$\mathcal{Q}^1_\eps$ & $\dfrac{2}{3}(1+\sqrt{1-\gamma})$ & $\dfrac{\sqrt{\dfrac{2}{3}(1+\sqrt{1-\gamma})}}{2\pi\eps}$& $\dfrac{2\pi\eps}{\sqrt{\dfrac{2}{3}(1+\sqrt{1-\gamma})}}$\\
$\mathcal{Q}^2_\eps$ & $\dfrac{2}{3}(2+\sqrt{\gamma})$ & $\dfrac{\sqrt{\dfrac{2}{3}(2+\sqrt{\gamma})}}{2\pi\eps}$ & $\dfrac{2\pi\eps}{\sqrt{\dfrac{2}{3}(2+\sqrt{\gamma})}}$
\end{tabular}\vspace{4pt}
}
\end{center}
These are due to symmetry the same results for horizontal or vertical lines,
while for the non-symmetric $\mathcal{Q}^3$ we have
\begin{center}
{\setlength\extrarowheight{10pt}
\begin{tabular}{r|l|l|l}
 $\mathcal{Q}^3_\eps$ & \textnormal{Correction coeff.} & \textnormal{Avg. number of zeros} & \textnormal{Avg. pattern size} \\
\hline
horizontal & $\dfrac{2}{3}(2+\sqrt{\gamma})$ & $\dfrac{\sqrt{\dfrac{2}{3}(2+\sqrt{\gamma})}}{2\pi \eps}$& $\dfrac{2\pi \eps}{\sqrt{\dfrac{2}{3}(2+\sqrt{\gamma})}}$\\
vertical & $\dfrac{2}{3}(8-6\sqrt{1-\gamma}+4\sqrt{\gamma})$ & $\dfrac{\sqrt{\dfrac{2}{3}(8-6\sqrt{1-\gamma}+4\sqrt{\gamma})}}{2\pi\eps}$& $\dfrac{2\pi\eps}{\sqrt{\dfrac{2}{3}(8-6\sqrt{1-\gamma}+4\sqrt{\gamma})}}$.
\end{tabular}\vspace{4pt}
}
\end{center}

\begin{figure}[htbp!]
\centering
\hspace{-.065\textwidth}\scalebox{.78}{
\setlength{\unitlength}{1pt}
\begin{picture}(0,0)
\includegraphics{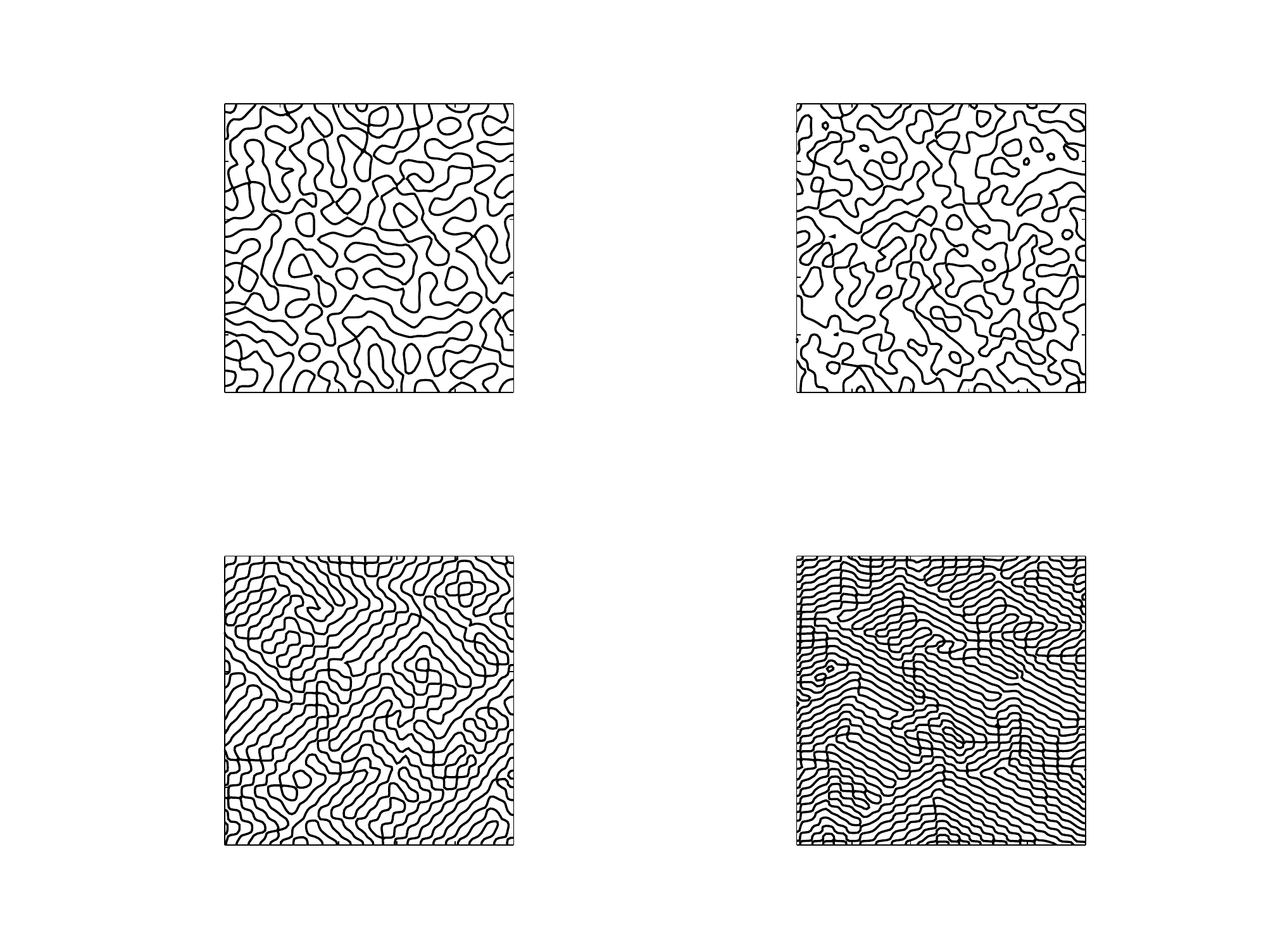}
\end{picture}%
\begin{picture}(576,433)(0,0)

\fontsize{10}{0}
\selectfont\put(235.051,250.943){\makebox(0,0)[t]{\textcolor[rgb]{0,0,0}{{$x$}}}}
\fontsize{10}{0}
\selectfont\put(98.5623,386.431){\makebox(0,0)[r]{\textcolor[rgb]{0,0,0}{{$y$}}}}

\selectfont\put(498.057,250.943){\makebox(0,0)[t]{\textcolor[rgb]{0,0,0}{{$x$}}}}
\fontsize{10}{0}
\selectfont\put(360.569,386.431){\makebox(0,0)[r]{\textcolor[rgb]{0,0,0}{{$y$}}}}

\fontsize{10}{0}
\selectfont\put(235.051,45.3292){\makebox(0,0)[t]{\textcolor[rgb]{0,0,0}{{$x$}}}}
\fontsize{10}{0}	
\selectfont\put(98.5623,178.818){\makebox(0,0)[r]{\textcolor[rgb]{0,0,0}{{$y$}}}}

\fontsize{10}{0}
\selectfont\put(498.057,45.3292){\makebox(0,0)[t]{\textcolor[rgb]{0,0,0}{{$x$}}}}
\fontsize{10}{0}
\selectfont\put(360.569,178.818){\makebox(0,0)[r]{\textcolor[rgb]{0,0,0}{{$y$}}}}
\fontsize{10}{0}

\selectfont\put(163.297,396.431){\makebox(0,0)[b]{\textcolor[rgb]{0,0,0}{{$R^{0.7}_{0.01}$}}}}
\fontsize{10}{0}
\selectfont\put(416.303,396.431){\makebox(0,0)[b]{\textcolor[rgb]{0,0,0}{{${\mathcal{Q}}^1_{0.01}$}}}}
\fontsize{10}{0}
\selectfont\put(163.297,190.0){\makebox(0,0)[b]{\textcolor[rgb]{0,0,0}{{${\mathcal{Q}}^2_{0.01}$}}}}
\fontsize{10}{0}
\selectfont\put(416.303,190.0){\makebox(0,0)[b]{\textcolor[rgb]{0,0,0}{{${\mathcal{Q}}^3_{0.01}$}}}}
\end{picture}

}
\caption{Patterns generated by the Fourier domains presented in Figure~\ref{im:domains} }
\label{im:4cases}
\end{figure}

In Figure~\ref{im:4cases} the simulations are run with $\gamma=0.7$ and $10^{-2}$. 
In those cases we have the following rounded off asymptotic values for the number of zeros on a line:
\begin{center}
{\setlength\extrarowheight{4pt}
\begin{tabular}{l|l|l|l}
\textnormal{Domain} & \textnormal{Correction coeff.} & \textnormal{Avg. number of zeros} & \textnormal{Avg. pattern size} \\
\hline
$R_{0.01}^{0.7}$ & $1$ & $15.915$ ($\times 1$) & $0.062832$ \\
$\mathcal{Q}_{0.01}^1$ & $1.032$ & $16.167$ ($\times 1.016$) & $0.061856$ \\
$\mathcal{Q}_{0.01}^2$ & $1.891$ & $21.887$ ($\times 1.375$) & $0.045690$ \\
$\mathcal{Q}_{0.01}^3$ (hor.) & $1.891$ & $21.887$ ($\times 1.375$) & $0.045690$\\
$\mathcal{Q}_{0.01}^3$ (ver.) & $5.374$ & $36.894$ ($\times 2.318$) & $0.027105$.
\end{tabular}\vspace{4pt}
}
\end{center}
\medskip

If we sample random vertical lines in the numerical simulations represented in Figure~\ref{im:4cases}, 
we get the following number of zeros, which are in good agreement with the predicted asymptotic results.

\begin{center}
{\setlength\extrarowheight{4pt}
\begin{tabular}{l|c}
\textnormal{Domain} & \textnormal{Avg. number of zeros (sampled)} \\
\hline
$R_{0.01}^{0.7}$ & $16.413$ \\
$\mathcal{Q}_{0.01}^1$ & $16.984$ \\
$\mathcal{Q}_{0.01}^2$ & $21.931$ \\
$\mathcal{Q}_{0.01}^3$ (ver.) & $37.315$.
\end{tabular}\vspace{4pt}
}
\end{center}

\section{Numerical Simulations} 
\label{sec:numerical}

\begin{figure}[htbp!]
\centering
\hspace{-.065\textwidth}\scalebox{.78}{
\setlength{\unitlength}{1pt}
\begin{picture}(0,0)
\includegraphics{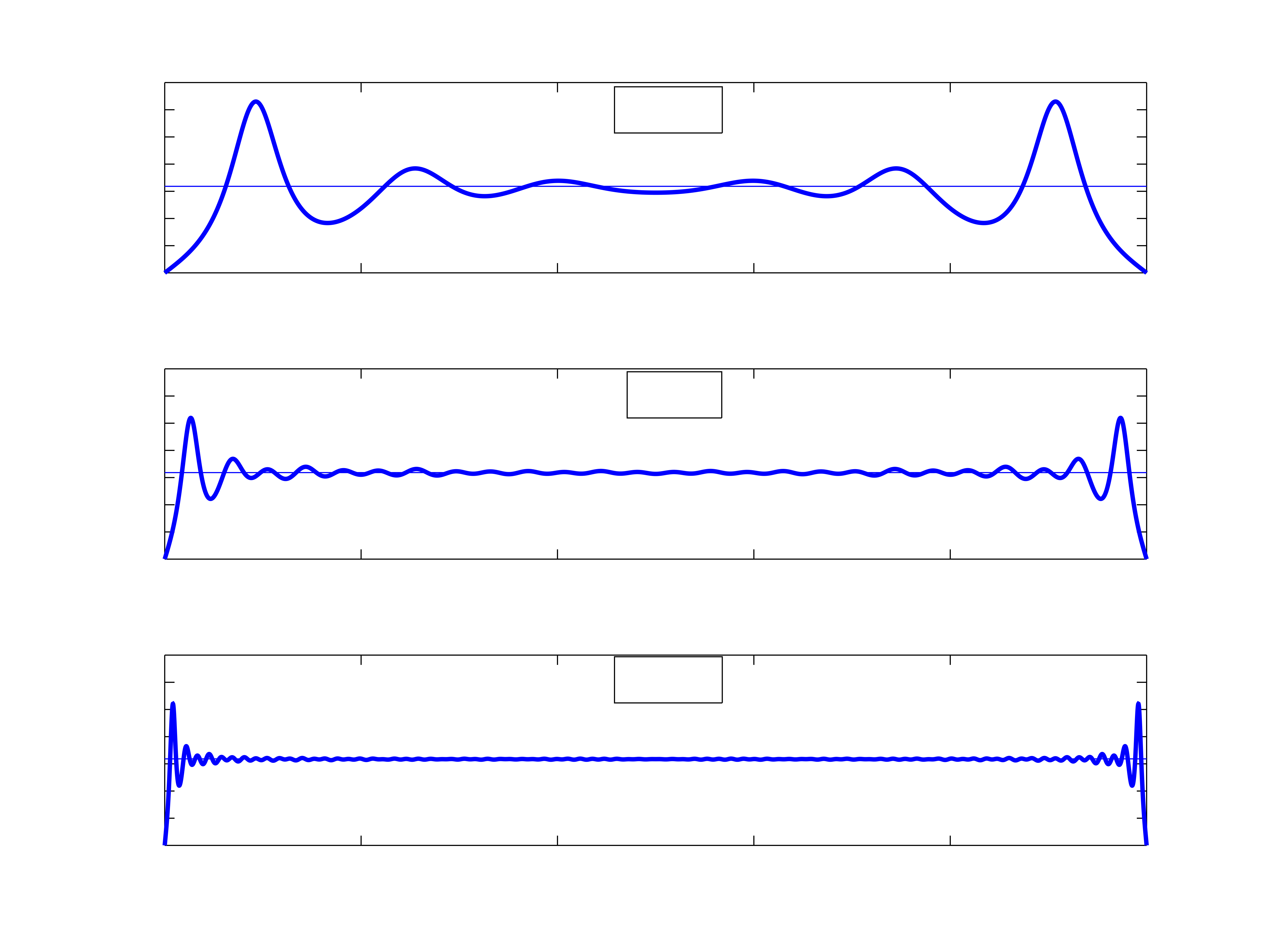}
\end{picture}%
\begin{picture}(576,433)(0,0)
\fontsize{10}{0}
\selectfont\put(74.88,303.934){\makebox(0,0)[t]{\textcolor[rgb]{0,0,0}{{0}}}}
\fontsize{10}{0}
\selectfont\put(164.16,303.934){\makebox(0,0)[t]{\textcolor[rgb]{0,0,0}{{0.2}}}}
\fontsize{10}{0}
\selectfont\put(253.44,303.934){\makebox(0,0)[t]{\textcolor[rgb]{0,0,0}{{0.4}}}}
\fontsize{10}{0}
\selectfont\put(342.72,303.934){\makebox(0,0)[t]{\textcolor[rgb]{0,0,0}{{0.6}}}}
\fontsize{10}{0}
\selectfont\put(432,303.934){\makebox(0,0)[t]{\textcolor[rgb]{0,0,0}{{0.8}}}}
\fontsize{10}{0}
\selectfont\put(521.28,303.934){\makebox(0,0)[t]{\textcolor[rgb]{0,0,0}{{1}}}}
\fontsize{10}{0}
\selectfont\put(69.8755,308.908){\makebox(0,0)[r]{\textcolor[rgb]{0,0,0}{{0}}}}
\fontsize{10}{0}
\selectfont\put(69.8755,321.271){\makebox(0,0)[r]{\textcolor[rgb]{0,0,0}{{0.05}}}}
\fontsize{10}{0}
\selectfont\put(69.8755,333.635){\makebox(0,0)[r]{\textcolor[rgb]{0,0,0}{{0.1}}}}
\fontsize{10}{0}
\selectfont\put(69.8755,345.998){\makebox(0,0)[r]{\textcolor[rgb]{0,0,0}{{0.15}}}}
\fontsize{10}{0}
\selectfont\put(69.8755,358.361){\makebox(0,0)[r]{\textcolor[rgb]{0,0,0}{{0.2}}}}
\fontsize{10}{0}
\selectfont\put(69.8755,370.724){\makebox(0,0)[r]{\textcolor[rgb]{0,0,0}{{0.25}}}}
\fontsize{10}{0}
\selectfont\put(69.8755,383.088){\makebox(0,0)[r]{\textcolor[rgb]{0,0,0}{{0.3}}}}
\fontsize{10}{0}
\selectfont\put(69.8755,395.451){\makebox(0,0)[r]{\textcolor[rgb]{0,0,0}{{0.35}}}}
\fontsize{10}{0}
\selectfont\put(74.88,173.74){\makebox(0,0)[t]{\textcolor[rgb]{0,0,0}{{0}}}}
\fontsize{10}{0}
\selectfont\put(164.16,173.74){\makebox(0,0)[t]{\textcolor[rgb]{0,0,0}{{0.2}}}}
\fontsize{10}{0}
\selectfont\put(253.44,173.74){\makebox(0,0)[t]{\textcolor[rgb]{0,0,0}{{0.4}}}}
\fontsize{10}{0}
\selectfont\put(342.72,173.74){\makebox(0,0)[t]{\textcolor[rgb]{0,0,0}{{0.6}}}}
\fontsize{10}{0}
\selectfont\put(432,173.74){\makebox(0,0)[t]{\textcolor[rgb]{0,0,0}{{0.8}}}}
\fontsize{10}{0}
\selectfont\put(521.28,173.74){\makebox(0,0)[t]{\textcolor[rgb]{0,0,0}{{1}}}}
\fontsize{10}{0}
\selectfont\put(69.8755,178.714){\makebox(0,0)[r]{\textcolor[rgb]{0,0,0}{{0}}}}
\fontsize{10}{0}
\selectfont\put(69.8755,191.077){\makebox(0,0)[r]{\textcolor[rgb]{0,0,0}{{0.05}}}}
\fontsize{10}{0}
\selectfont\put(69.8755,203.441){\makebox(0,0)[r]{\textcolor[rgb]{0,0,0}{{0.1}}}}
\fontsize{10}{0}
\selectfont\put(69.8755,215.804){\makebox(0,0)[r]{\textcolor[rgb]{0,0,0}{{0.15}}}}
\fontsize{10}{0}
\selectfont\put(69.8755,228.167){\makebox(0,0)[r]{\textcolor[rgb]{0,0,0}{{0.2}}}}
\fontsize{10}{0}
\selectfont\put(69.8755,240.53){\makebox(0,0)[r]{\textcolor[rgb]{0,0,0}{{0.25}}}}
\fontsize{10}{0}
\selectfont\put(69.8755,252.894){\makebox(0,0)[r]{\textcolor[rgb]{0,0,0}{{0.3}}}}
\fontsize{10}{0}
\selectfont\put(69.8755,265.257){\makebox(0,0)[r]{\textcolor[rgb]{0,0,0}{{0.35}}}}
\fontsize{10}{0}
\selectfont\put(74.88,43.5463){\makebox(0,0)[t]{\textcolor[rgb]{0,0,0}{{0}}}}
\fontsize{10}{0}
\selectfont\put(164.16,43.5463){\makebox(0,0)[t]{\textcolor[rgb]{0,0,0}{{0.2}}}}
\fontsize{10}{0}
\selectfont\put(253.44,43.5463){\makebox(0,0)[t]{\textcolor[rgb]{0,0,0}{{0.4}}}}
\fontsize{10}{0}
\selectfont\put(342.72,43.5463){\makebox(0,0)[t]{\textcolor[rgb]{0,0,0}{{0.6}}}}
\fontsize{10}{0}
\selectfont\put(432,43.5463){\makebox(0,0)[t]{\textcolor[rgb]{0,0,0}{{0.8}}}}
\fontsize{10}{0}
\selectfont\put(521.28,43.5463){\makebox(0,0)[t]{\textcolor[rgb]{0,0,0}{{1}}}}
\fontsize{10}{0}
\selectfont\put(69.8755,48.52){\makebox(0,0)[r]{\textcolor[rgb]{0,0,0}{{0}}}}
\fontsize{10}{0}
\selectfont\put(69.8755,60.8833){\makebox(0,0)[r]{\textcolor[rgb]{0,0,0}{{0.05}}}}
\fontsize{10}{0}
\selectfont\put(69.8755,73.2466){\makebox(0,0)[r]{\textcolor[rgb]{0,0,0}{{0.1}}}}
\fontsize{10}{0}
\selectfont\put(69.8755,85.6099){\makebox(0,0)[r]{\textcolor[rgb]{0,0,0}{{0.15}}}}
\fontsize{10}{0}
\selectfont\put(69.8755,97.9732){\makebox(0,0)[r]{\textcolor[rgb]{0,0,0}{{0.2}}}}
\fontsize{10}{0}
\selectfont\put(69.8755,110.336){\makebox(0,0)[r]{\textcolor[rgb]{0,0,0}{{0.25}}}}
\fontsize{10}{0}
\selectfont\put(69.8755,122.7){\makebox(0,0)[r]{\textcolor[rgb]{0,0,0}{{0.3}}}}
\fontsize{10}{0}
\selectfont\put(69.8755,135.063){\makebox(0,0)[r]{\textcolor[rgb]{0,0,0}{{0.35}}}}
\fontsize{10}{0}
\selectfont\put(279.36,383.02){\makebox(0,0)[l]{\textcolor[rgb]{0,0,0}{{ $\varepsilon = 10^{-1.5}$ }}}}
\fontsize{10}{0}
\selectfont\put(285.12,253.42){\makebox(0,0)[l]{\textcolor[rgb]{0,0,0}{{ $\varepsilon = 10^{-2}$ }}}}
\fontsize{10}{0}
\selectfont\put(279.36,123.82){\makebox(0,0)[l]{\textcolor[rgb]{0,0,0}{{ $\varepsilon = 10^{-2.5}$ }}}}
\end{picture}
}
\caption{Zeros density for $t=0.5$ and $\gamma=0.8$ (multiplied by $\eps$) for some values of $\eps$ on the ring-shaped domain. These pictures are a graphical representation of the pointwise limit $\eps \delta(x)\to \frac{1}{2\pi}$ proven in Theorem \ref{thm:main}.}
\label{im:deltas}
\end{figure}

In Figure \ref{im:deltas} we plot the (rescaled) density of zeros 
$\eps\delta(x) = \dfrac{\eps}{\pi}\sqrt{W_t(x)}$ 
for various values of $\eps$ on the domain $R^{0.8}$.
We scale by $\eps$ so that the convergence to $\sfrac{1}{2\pi}$ is easily seen. 
We can observe that the density in fact converges pointwise (apart from $x=0,1$). 
It even seems that the convergence is uniform away from an arbitrarily small boundary layer.

It is interesting to fix a point $x_0\in(0,1)$ and to track the value of $\eps\delta(x_0)$ 
for $\eps\to 0$. This is done in Figure \ref{im:followX}. 

\begin{figure}[htbp!]
\centering
\hspace{-.065\textwidth}\scalebox{.78}{
\setlength{\unitlength}{1pt}
\begin{picture}(0,0)
\includegraphics{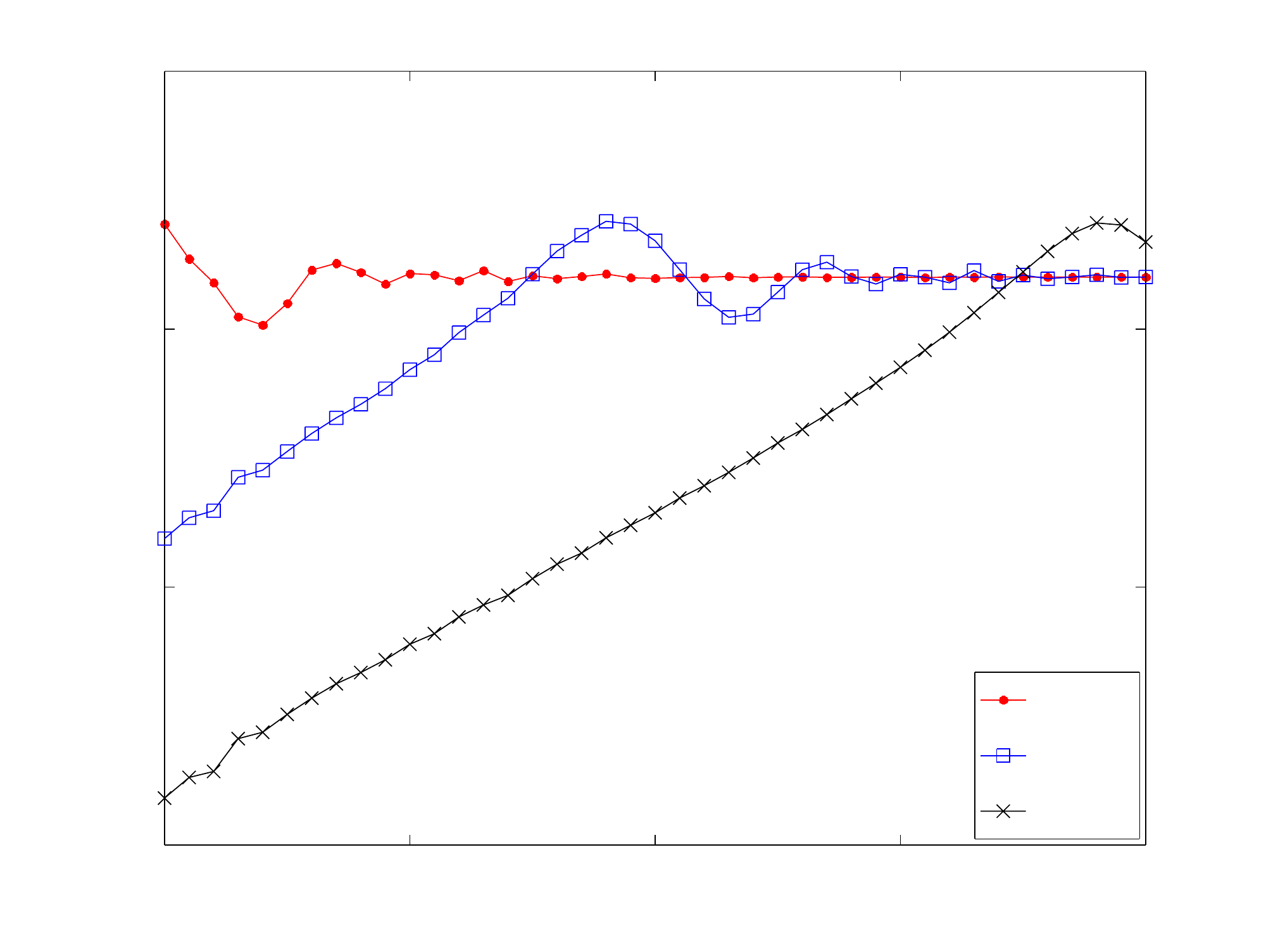}
\end{picture}%
\begin{picture}(576,433)(0,0)
\fontsize{10}{0}
\selectfont\put(74.88,43.5188){\makebox(0,0)[t]{\textcolor[rgb]{0,0,0}{{1.5}}}}
\fontsize{10}{0}
\selectfont\put(186.48,43.5188){\makebox(0,0)[t]{\textcolor[rgb]{0,0,0}{{2}}}}
\fontsize{10}{0}
\selectfont\put(298.08,43.5188){\makebox(0,0)[t]{\textcolor[rgb]{0,0,0}{{2.5}}}}
\fontsize{10}{0}
\selectfont\put(409.68,43.5188){\makebox(0,0)[t]{\textcolor[rgb]{0,0,0}{{3}}}}
\fontsize{10}{0}
\selectfont\put(521.28,43.5188){\makebox(0,0)[t]{\textcolor[rgb]{0,0,0}{{3.5}}}}
\fontsize{10}{0}
\selectfont\put(69.8755,48.52){\makebox(0,0)[r]{\textcolor[rgb]{0,0,0}{{1e-3}}}}
\fontsize{10}{0}
\selectfont\put(69.8755,165.88){\makebox(0,0)[r]{\textcolor[rgb]{0,0,0}{{1e-2}}}}
\fontsize{10}{0}
\selectfont\put(69.8755,283.24){\makebox(0,0)[r]{\textcolor[rgb]{0,0,0}{{1e-1}}}}
\fontsize{10}{0}
\selectfont\put(69.8755,400.6){\makebox(0,0)[r]{\textcolor[rgb]{0,0,0}{{1e+0}}}}
\fontsize{10}{0}
\selectfont\put(298.08,30.5189){\makebox(0,0)[t]{\textcolor[rgb]{0,0,0}{{$n$, where $\varepsilon = 10^{-n}$}}}}
\fontsize{10}{0}
\selectfont\put(39.8755,224.56){\rotatebox{90}{\makebox(0,0)[b]{\textcolor[rgb]{0,0,0}{{$\varepsilon \cdot \delta(x_0)$}}}}}
\fontsize{10}{0}
\selectfont\put(469.504,114.542){\makebox(0,0)[l]{\textcolor[rgb]{0,0,0}{{$\ x_0 = 10^{-1}$}}}}
\fontsize{10}{0}
\selectfont\put(469.504,89.2285){\makebox(0,0)[l]{\textcolor[rgb]{0,0,0}{{$\ x_0 = 10^{-2}$}}}}
\fontsize{10}{0}
\selectfont\put(469.504,63.9152){\makebox(0,0)[l]{\textcolor[rgb]{0,0,0}{{$\ x_0 = 10^{-3}$}}}}
\end{picture}
}
\caption{Values for $\eps\delta(x_0)$ for various values of $x_0$. The parameter $\eps$ is on the abscissa and the scaled value of the density is on the ordinate. }\label{im:followX}
\end{figure}

Once again we can read off the convergence to $\sfrac{1}{2\pi}$ 
but we can also quantify the convergence's disturbance by the ``travelling wave'' 
which goes to the borders of the interval: even if we choose a point which is very close to $0$ (the critical points), 
for example $x_0=10^{-2}$, we see that the magnitude of the density grows very quickly at first until it hits the limit value and then exhibits a damped oscillation around that asymptotic growth rate. 
When compared to Figure \ref{im:deltas}, we can imagine those travelling waves approaching 
and going through such a value $x_0$ until after some threshold $\eps$, 
the oscillation is negligible.

\section{Sloped lines}\label{sec:sloped}

As we have already mentioned, the result given in Theorem~\ref{thm:main} 
is proven only for horizontal lines and by symmetry for vertical lines. 
We claim that the result is far more general.
In order to do so, we   consider  any sloped line through 
the origin. For the ring $R^\gamma$ we will see that we obtain the same asymptotic behaviour.

To fix the setting we assume $y=\mu x$,
with $\mu\in (0,1]$. 
This last assumption is just for simplicity of presentation, 
as we can use the reflection at the diagonal
to get the analogous result for the remaining half square.

Note that considering lines through the origin is not restrictive, 
we do this only for simplicity of presentation. 
The more general case $y=\mu x+\tau$ behaves in the same way, 
using the trigonometric formulas for the cosine of a sum, 
which just adds to the number of the terms involved. 

The important thing in that situation is to consider 
only the intersection $(x, \mu x+\tau)\cap [0,1]^2$, 
so we have to pay attention to the $x$'s that are in our domain, 
and to the corresponding length of the segment we need for the renormalisation.

For $y=\mu x$, we have, instead of $w_t(x)$, the following vector of functions:
\begin{equation*}
w_\mu(x)=\left(\cos(k\pi x)\cos(l\pi\mu x)/\left(\sum_{m,n}\cos^2(m\pi x)\cos^2(n\pi \mu x)\right)^{1/2}\right)_{k,l}
\end{equation*}
and also 
\begin{align*}
W_\mu (x) & = \frac{\sum_{k,l}\left[-S_1(k\pi \sin(k\pi x)\cos(l\pi \mu x)+l\pi \mu \sin(l\pi \mu x)\cos(k\pi x)) + \cos(k \pi x)\cos(l\pi\mu x)\tilde{S}_2\right]^2}{(S_1)^{3}}\\
& = \frac{\sum_{k,l}(k\pi \sin(k\pi x)\cos(l\pi \mu x)+l\pi \mu \sin(l\pi \mu x)\cos(k\pi x))^2}{S_1}-\left(\frac{\tilde{S}_2}{S_1}\right)^2\\
& = \frac{\tilde{S}_3}{S_1}-\left(\frac{\tilde{S}_2}{S_1}\right)^2,
\end{align*}
where $S_1$ is as in the horizontal case  with $t :=\mu x$ and 
\begin{gather*}
\tilde{S}_2 = \sum_{k,l}\cos(k\pi x)\cos(l\pi t)(k\pi \sin(k\pi x)\cos(l\pi t)+l\pi \mu \sin(l\pi t)\cos(k\pi x))\\
\tilde{S}_3 = \sum_{k,l}(k\pi \sin(k\pi x)\cos(l\pi t)+l\pi \mu \sin(l\pi t)\cos(k\pi x))^2,
\end{gather*}
which are the generalisations of the $S_2$ and $S_3$ encountered before.
To prove that things work the same way in this case, we have to prove first the following:
\begin{equation}
\label{e:so}
\lim_{\eps\to0}\eps^2 \left(\frac{\tilde{S}_2}{S_1}\right)^2=0.
\end{equation}
This can be shown with arguments analogous to those for horizontal lines.
The main new tool we need is that the averaging works for weights $(k,1)$, $(1,l)$, and $(k,l)$.
Then we need to notice that any term with a $\sin$-function averages
in the limit to $0$ in the Birkhoff ergodic theorem. This implies~\eqref{e:so}.

Secondly, with the same argument, we see the following asymptotic equivalence
\[
\eps^2 \frac{\tilde{S}_3}{S_1} 
\sim \eps^2 \frac{1}{S_1} \Big( 
\sum_{k,l}k^2\pi^2 \sin(k\pi x)^2\cos(l\pi \mu x)^2
+ \mu^2 \sum_{k,l} l^2\pi^2 \sin(l\pi \mu x)^2 \cos(k\pi x)^2 
\Big) 
\]
This implies the following Theorem:

\begin{theorem}
For sloped lines $y=\mu x$ with $\mu\in[0.1]$ and general Fourier-domains $D$ we have that 
\[
 \delta(x)^2
 \sim 
  \frac{1}{4\eps^2}\cdot \frac1{\lambda(D)} \Big(
\lambda_{(k^2,1)}(D)
+ \mu^2 \lambda_{(1,l^2)}(D)
\Big).
 \]
\end{theorem}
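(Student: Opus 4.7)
Our plan is to apply the Edelman-Kostlan theorem (Theorem~\ref{thm:edelman}) to the vector-valued parametrisation $w_\mu(x)$ of the line $y=\mu x$, which reduces the problem to computing the asymptotics of $W_\mu(x) = \tilde S_3/S_1 - (\tilde S_2/S_1)^2$, since $\delta(x)^2 = W_\mu(x)/\pi^2$. The term $(\tilde S_2/S_1)^2$ is already shown in~\eqref{e:so} to be $o(\eps^{-2})$, so the leading contribution comes entirely from $\tilde S_3/S_1$.

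Expanding the square inside $\tilde S_3$ we obtain
\begin{align*}
\tilde S_3 &= \pi^2 \sum_{(k,l)\in D_\eps} k^2 \sin^2(k\pi x)\cos^2(l\pi\mu x) + \pi^2\mu^2 \sum_{(k,l)\in D_\eps} l^2 \sin^2(l\pi\mu x)\cos^2(k\pi x) \\
&\quad + 2\pi^2\mu \sum_{(k,l)\in D_\eps} kl\, \sin(k\pi x)\cos(k\pi x)\sin(l\pi\mu x)\cos(l\pi\mu x).
\end{align*}
For each of the first two sums we invoke the weighted averaging framework of Section~\ref{sec:proof}. With weights $a_{k,l}=k^2$ (respectively $a_{k,l}=l^2$), which generate measures of Lebesgue density $\xi^2$ (respectively $\eta^2$) on the rescaled domain $D$, Corollary~\ref{cor:weightedaverage} together with the product Birkhoff theorem gives, for every $x$ and $\mu x$ that are irrational, the factor $\int_0^1\sin^2(\pi z)\,dz\cdot\int_0^1\cos^2(\pi z)\,dz = 1/4$ in the normalised limit. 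Extending from origin-aligned rectangles to the general domain $D$ through Lemma~\ref{lem:domainexpansion} and combining with $S_1\sim|D_\eps|/4\sim\eps^{-2}\lambda(D)/4$ (exactly as in~\eqref{eq:S1}) produces the two asymptotic contributions proportional to $\lambda_{(k^2,1)}(D)$ and $\mu^2\lambda_{(1,l^2)}(D)$, each divided by $\lambda(D)$ and $\eps^{2}$.

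The cross term is the most delicate point: its weight $a_{k,l}=kl$ generates the measure of Lebesgue density $\xi\eta$, and its integrand factors as $\tfrac{1}{4}\sin(2k\pi x)\sin(2l\pi\mu x)$. Since $\int_0^1\sin(2\pi z)\,dz=0$, applying the one-dimensional Birkhoff theorem in each coordinate, and using Corollary~\ref{cor:weightedaverage} to absorb the weights, shows that the normalised weighted sum tends to zero; hence after comparing to $S_1$ this contribution is of strictly lower order than $\eps^{-2}$ and is absorbed into the error. Collecting the surviving terms and dividing by $\pi^2$ then yields the claimed asymptotic for $\delta(x)^2$. The principal technical hurdle is checking the weighted averaging condition on a general (possibly non-rectangular) Fourier domain $D$ such as the quarter-ring $R^\gamma$, but this goes through in our setting because in all three sums both the weights and the integrands factor across $k$ and $l$, so the two-dimensional ergodic step reduces to the one-dimensional version already used in Section~\ref{sec:proof}; the irrationality hypothesis on $x$ and $\mu x$ removes only a null set in $x$, which is harmless since $\delta(x)$ is ultimately integrated, and the missing rational case can be handled directly as in the appendix if desired.
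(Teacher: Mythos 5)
Your proof takes essentially the same route as the paper's own (much terser) argument: expand $\tilde S_3$, handle the two diagonal terms via the weighted averaging framework with weights $k^2$ and $l^2$ (reducing to the one-dimensional Birkhoff theorem because everything factors), dispose of the cross term with weight $kl$ since the sine averages to zero, and use~\eqref{e:so} for the $\tilde S_2$ contribution. In fact you supply more detail than the paper, which only sketches these steps with the remark that ``the averaging works for weights $(k,1)$, $(1,l)$, and $(k,l)$'' and that sine terms vanish in the limit.
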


If the domain $D$ is symmetric with respect to the reflection along the diagonal
like the ring $R^\gamma$ or the squares $\mathcal{Q}_1$ and $\mathcal{Q}_2$. 
Then  $\delta(x)^2$ is compared to a horizontal line just modified by a factor $(1+\mu^2)$.
But this just compensates the length of the segment, 
which is not 1 as before but $\sqrt{1+\mu^2}$.
Now the two factors cancel and the average pattern size is
for every sloped line the same as for the horizontal line.

Thus we obtain in the symmetric examples like $R^\gamma$, $\mathcal{Q}_1$, and $\mathcal{Q}_2$
that for any sloped line the  average pattern size is
for every sloped line the same and of order $\eps$.

\appendix
\section{The rational case}\label{sec:rational}
In the proof of Lemma \ref{lem:maincorollary} we only considered irrational values of $x$ and $t$. This is sufficient because we are interested in the density on a set of Lebesgue-measure 1 because we integrate it anyway. But as an additional consideration, for rational values, the periodicity of trigonometric functions yields a non-ergodic averaging result of the same type as on the irrational numbers.

We restrict ourselves to a particularly easy case but the actual proof is a straightforward generalization.

Let $x = \frac{1}{n}$ and $N = l\cdot n$ for $l,n\in\N$ without loss of generalization and consider
\begin{align*}
\frac{1}{N}\sum_{k=1}^N\cos^2(k\pi x) &= \frac{1}{2N}\sum_{k=1}^N\cos(2\pi k\frac{1}{n})\\
&= \frac{1}{2}+ \frac{1}{2}\cdot \left(\frac{1}{2}\frac{\sin(\frac{2\pi N+\pi}{n})-1}{\sin(\frac{\pi}{n})}\right)\\
&=\frac{1}{2}-\frac{1}{4N}\xrightarrow{N\to\infty}\frac{1}{2}.
\end{align*}
This corresponds nicely to the ergodic result
\[
\lim_{N\to\infty}\frac{1}{N}\sum_{k=1}^N\cos^2(k\pi x) = \frac{1}{2}
\]
for irrational $x$. In this way, we can for example prove 
\[
\frac1{|R_\eps|}S_1 = \frac{1}{|R_\eps|}\sum_{k,l\in R_\eps}\cos^2(k\pi x)\cos^2(l\pi t)\xrightarrow{N\to\infty}\frac{1}{4}.
\]

\section*{Acknowledgments}
 The authors want to thank Thomas Wanner for pointing out the result by Edelman and Kostlan.
 Moreover, Blömker \& Wacker would like to thank Evelyn Sander and Thomas Wanner for their hospitality.

\bibliographystyle{abbrv}
\bibliography{bib}
\end{document}